%%%%%%%%%%%%%%%%%%%% author.tex %%%%%%%%%%%%%%%%%%%%%%%%%%%%%%%%%%%
%
% sample root file for your "contribution" to a contributed volume
%
% Use this file as a template for your own input.
%
%%%%%%%%%%%%%%%% Springer %%%%%%%%%%%%%%%%%%%%%%%%%%%%%%%%%%

% RECOMMENDED %%%%%%%%%%%%%%%%%%%%%%%%%%%%%%%%%%%%%%%%%%%%%%%%%%%
\documentclass[graybox]{svmult}

% choose options for [] as required from the list
% in the Reference Guide

\usepackage{mathptmx}       % selects Times Roman as basic font
\usepackage{helvet}         % selects Helvetica as sans-serif font
\usepackage{courier}        % selects Courier as typewriter font
\usepackage{type1cm}        % activate if the above 3 fonts are
                            % not available on your system
%
\usepackage{makeidx}         % allows index generation
\usepackage{graphicx}        % standard LaTeX graphics tool
                             % when including figure files
\usepackage{multicol}        % used for the two-column index
\usepackage[bottom]{footmisc}% places footnotes at page bottom

\usepackage{amssymb}
\usepackage{amsmath}

\usepackage{url}

\usepackage{caption}
\captionsetup{justification = centering}

\spdefaulttheorem{project}{Research Project}{\bf}{}
\newcommand{\resproject}[1]{\begin{svgraybox} \begin{project} #1 \end{project} \end{svgraybox}}

\spdefaulttheorem{cproblem}{Challenge Problem}{\bf}{}

\spnewtheorem*{prerequisites}{Suggested prerequisites}{\bf}{\small\em}

% see the list of further useful packages
% in the Reference Guide

\makeindex             % used for the subject index
                       % please use the style svind.ist with
                       % your makeindex program

%comments in color

\newcommand{\excise}[1]{}

\renewcommand\>{\rangle}
\newcommand\<{\langle}

\newcommand{\ZZ}{\mathbb{Z}}

%%%%%%%%%%%%%%%%%%%%%%%%%%%%%%%%%%%%%%%%%%%%%%%%%%%%%%%%%%%%%%%%%%%%%%%%%%%%%%%%%%%%%%%%%

\begin{document}

\title*{Beyond Coins, Stamps, and Chicken McNuggets:\ an Invitation to Numerical Semigroups}
\titlerunning{An Invitation to Numerical Semigroups}
% Use \titlerunning{Short Title} for an abbreviated version of
% your contribution title if the original one is too long
\author{Scott Chapman, Rebecca Garcia, and Christopher O'Neill}
% Use \authorrunning{Short Title} for an abbreviated version of
% your contribution title if the original one is too long
\institute{Scott Chapman \at Mathematics and Statistics Department, Sam Houston State University, Huntsville, TX 77340, \email{scott.chapman@shsu.edu}
\and Rebecca Garcia \at Mathematics and Statistics Department, Sam Houston State University, Huntsville, TX 77340, \email{mth_reg@shsu.edu}
\and Christopher O'Neill \at Mathematics and Statistics Department, San Diego State University, San Diego, CA 92182 \email{cdoneill@sdsu.edu}}
%
% Use the package "url.sty" to avoid
% problems with special characters
% used in your e-mail or web address
%

\maketitle

\abstract{
We give a self contained introduction to numerical semigroups, and present several open problems centered on their factorization properties.
% , which are accessible to undergraduates.  
}

\begin{prerequisites}
Linear Algebra, Number Theory.
\end{prerequisites}

%!TEX root = 0-ursemigroups.tex

%%%%%%%%%%%%%%%%%%%%%%%%%%%%%%%%%%%%%%%%%%%%%%%%%%%%%%%%%%%%%%%%%%%%%%%%%
\section{Introduction}%%%%%%%%%%%%%%%%%%%%%%%%%%%%%%%%%%%%%%%%%%%%%%%%%%%
\label{sec:intro}%%%%%%%%%%%%%%%%%%%%%%%%%%%%%%%%%%%%%%%%%%%%%%%%%%%%%%%%
%raggedbottom%%%%%%%%%%%%%%%%%%%%%%%%%%%%%%%%%%%%%%%%%%%%%%%%%%%%%%%%%%%%

Many difficult mathematics problems have extremely simple roots.  For instance, suppose you walk into your local convenience store to buy that candy
bar you more than likely should not eat.  Suppose the candy bar costs $X$ cents and you have $c_1$ pennies, $c_2$ nickels, $c_3$ dimes, and $c_4$ quarters 
in your pocket (half dollars are of course too big to carry in your pocket).  Can you buy the candy bar?  You can if there are non-negative integers $x_1, \ldots, x_4$ such
that 
\[
x_1 + 5x_2 + 10x_3 +25x_4 \geq X
\]
with $0\leq x_i\leq c_i$ for each $1\leq i\leq 4$.  Obviously, this is not difficult mathematics; it is a calculation that almost everyone goes through in their heads multiple times a week.  Now, suppose the cashier indicates that the register is broken, and that the store can only accept the exact amount of money necessary in payment for the candy bar.  This changes the problem to 
\[
x_1 + 5x_2 + 10x_3 +25x_4 = X
\]
with the same restrictions on the $x_i$'s.  

Given our change system, the two equations above are relatively easy with which to deal, but changing the values of the coins involved can make the problem much more difficult.  For instance, instead of our usual change, suppose you have a large supply of 3-cent pieces and 7-cent pieces.  Can you buy the 11-cent candy bar?  With a relatively gentle calculation, even your English major roommate concludes that you cannot.  There is no solution of $3x_1 + 7x_2=11$ in the non-negative integers.  But with a little more tinkering, you can unearth a deeper truth.
\medskip

\noindent\textbf{Big Fact:} \textit{In a 3-7 coin system, you can buy any candy bar costing above 11 cents.}
\medskip

The Big Fact follows since $12 = 3 \cdot 4$, $13 = 1 \cdot 7 + 2 \cdot 3$, $14 = 2 \cdot 7$, and any integer value greater than 14 can be obtained by adding the needed number of 3 cent pieces to one of these sums.  But why limit the fun to coins?  Analogous problems can be constructed using postage stamps and even Chicken McNuggets.  The key to what we are doing involves an interesting mix of linear algebra\index{linear algebra}, number theory\index{number theory}, and abstract algebra\index{abstract algebra}, and quickly leads to some simply stated mathematics problems that are very deep and remain (over a long period of time) unsolved.  Moreover, these problems have been the basis of a wealth of undergraduate research projects, many of which led to publication in major mathematics research journals.  In order to discuss these research level problems, we will now embark on a more technical description of the work at hand.  As our pages unfold, the reader should keep in mind the humble beginnings of what will become highly challenging work. 

%%%%%%%%%%%%%%%%%%%%%%%%%%%%%%%%%%%%%%%%%%%%%%%%%%%%%%%%%%%%

A \emph{numerical semigroup}\index{numerical~semigroup} is a subset $S \subset \ZZ_{\ge 0}$ of the non-negative integers that 
\medskip

(i) is closed under addition, i.e., whenever $a, b \in S$, we also have $a + b \in S$, and \\
\indent (ii) has finite complement in $\ZZ_{\ge 0}$. \\

\noindent The two smallest examples of numerical semigroups are $S = \ZZ_{\ge 0}$ and $S = \ZZ_{\ge 0} \setminus \{1\}$.  Often, the easiest way to specify a numerical semigroup is by providing a list of \emph{generators}\index{numerical~semigroup!generators}.  For instance,
\[
\<n_1, \ldots, n_k\> = \{a_1n_1 + \cdots + a_kn_k : a_1, \ldots, a_k \in \ZZ_{\ge 0}\}
\]
equals the set of all non-negative integers obtained by adding copies of $n_1, \ldots, n_k$ together.  The smallest nontrivial numerical semigroup $S = \ZZ_{\ge 0} \setminus \{1\}$ can then also be written as $S = \<2,3\>$, since every non-negative even integer can be written as $2k$ for some $k \ge 0$, and every odd integer greater than 1 can be written as $2k + 3$ for some $k \ge 0$.  It is clear that generating systems are not unique (for instance, $\<2,3\>=\<2,3,4\>$), but we will argue later than each numerical semigroup has a unique generating set\index{numerical~semigroup!generating~set} of minimal cardinality.  Note that problems involving the 3-7 coin system take place in the numerical semigroup $\<3,7\>$. 

\begin{example}
Although numerical semigroups may seem opaque, there are some very practical ways to think about them.  For many years, McDonald's sold Chicken McNuggets in packs of 6, 9, and 20, and as such, it is possible to buy exactly $n$ Chicken McNuggets using only those three pack sizes precisely when $n \in \<6, 9, 20\>$.  For this reason, the numerical semigroup $S = \<6,9,20\>$ is known as the \emph{McNugget semigroup}\index{McNugget~semigroup} (see \cite{COn18}).  It turns out that it is impossible to buy exactly 43 Chicken McNuggets using only packs of 6, 9, and 20, but for any integer $n > 43$, there is some combination of packs that together contain exactly $n$ Chicken McNuggets.  
\end{example}

By changing the quantities involved (be it with coins or Chicken McNuggets) yields what is known in the literature as the  
\emph{Frobenius coin-exchange problem}\index{Frobenius~coin-exchange~problem}.  To Frobenius, each generator of a numerical semigroup corresponds to a coin denomination, and the largest monetary value for which one cannot make even change is the \emph{Frobenius number}\index{Frobenius~number}.  In terms of numerical semigroups, the \emph{Frobenius number} of $S$ is given by 
\[
F(S) = \max(\ZZ_{\ge 0} \setminus S).
\]
Sylvester proved in 1882 (see \cite{Syl1882}) that in the 2-coin problem (i.e., if $S = \<a, b\>$ with $\gcd\,(a,b)=1$), the Frobenius number is given by $F(S) = ab - (a + b)$.  To date, a general formula for the Frobenius number of an arbitrary numerical semigroup (or even a ``fast'' algorithm to compute it from a list of generators) remains out of reach.  

While deep new results concerning the Frobenius number are likely beyond the scope of a reasonable undergraduate research project, a wealth of problems related to numerical semigroups have been a popular topic in REU programs for almost 20 years.  To better describe this work, we will need some definitions.  Assume that $n_1, \ldots, n_k$ is a set of generators for a numerical semigroup $S$.  For $n \in S$, we refer to
\[
\mathsf Z(n) = \bigg\{(x_1, \ldots, x_k) \, \mid \, n = \sum_{i=1}^k x_in_i\bigg\}
\]
as the \emph{set of factorizations}\index{set~of~factorizations} of $n \in S$, and to
\[
\mathsf L(n) = \bigg\{\sum_{i=1}^k x_k \, \mid \, (x_1, \ldots, x_k) \in \mathsf Z(n)\bigg\}
\]
as the \emph{set of factorization lengths}\index{set~of~factorization~lengths} of $n \in S$.  Each element of $\mathsf Z(n)$ represents a distinct \emph{factorization} of $n$ (that is, an expression 
$$n = x_1n_1 + \cdots + x_kn_k$$
of $n$ as a sum of $n_1, \ldots, n_k$, wherein each $x_i$ denotes the number of copies of $n_i$ used in the expression).  The local descriptors $\mathsf Z(n)$ and $\mathsf L(n)$ can be converted into global descriptors of $S$ by setting
\[
\mathcal Z(S) = \{\mathsf Z(n) \, \mid \, n \in S\}
\]
to be the \emph{complete set of factorizations}\index{complete~set~of~factorizations} of $S$ and 
\[
\mathcal L(S) = \{\mathsf L(n) \, \mid \, n \in S\}
\]
to be the \emph{complete set of lengths}\index{complete~set~of~lengths} of $S$ (note that these are both sets of sets).  

Hence, while we started by exploring the membership problem for a numerical semigroup (i.e., given $m \in \ZZ_{\ge 0}$, is $m \in S$?), we now focus on two different questions.

\begin{enumerate}
\item 
Given $n \in S$ what can we say about the set $\mathsf Z(n)$?

\item 
Given $n \in S$, what can we say about the set $\mathsf L(n)$?

\end{enumerate}

\noindent We start with a straightforward but important observation.

\begin{exercise}
If $S$ is a numerical semigroup and $n \in S$, then $\mathsf Z(n)$ and $\mathsf L(n)$ are both finite sets.
\end{exercise}

\begin{example}\label{e:2gen}
Calculations of the above sets tend to be nontrivial and normally require some form of a computer algebra system.  To demonstrate this, we return to the elementary example $S = \<2,3\>$ mentioned earlier.  As previously noted, any integer $n \geq 2$ is in $S$.  In Table~\ref{tb:2gen}, we give $\mathsf Z(n)$ and $\mathsf L(n)$ for some basic values of $n \in S$.  
\end{example}

\begin{table}[t]
\[
\begin{array}{||c|c|c||c|c|c||}
\hline
n & \mathsf Z(n) & \mathsf L(n) & n & \mathsf Z(n) & \mathsf L(n) \\
\hline
\mathbf{2} & \{(1,0)\} & \{1\} &
\mathbf{11} & \{(1,3),(4,1)\} & \{4,5\} \\
\hline
\mathbf{3} & \{(0,1)\} & \{1\} &
\mathbf{12} & \{(6,0),(3,2),(0,4)\} & \{4,5,6\} \\
\hline
\mathbf{4} & \{(2,0)\} & \{2\} & 
\mathbf{13} & \{(5,1),(2,3)\} & \{5,6\} \\
\hline
\mathbf{5} & \{(1,1)\} & \{2\} &
\mathbf{14} & \{(7,0), (4,2), (1,4)\} & \{5,6,7\} \\
\hline
\mathbf{6} & \{(3,0), (0,2)\} & \{2,3\} &
\mathbf{15} & \{(6,1),(3,3),(0,5)\} & \{5,6,7\} \\
\hline
\mathbf{7} & \{(2,1)\} & \{3\} &
\mathbf{16} & \{(8,0),(5,2),(2,4)\} & \{6,7,8\} \\
\hline 
\mathbf{8} & \{(4,0),(1,2)\} & \{3,4\} &
\mathbf{17} & \{(7,1),(4,3),(1,5)\} & \{6,7,8\} \\
\hline
\mathbf{9} & \{(3,1),(0,3)\} & \{3,4\} &
\mathbf{18} & \{(9,0), (6,3), (3,4), (0,6)\} & \{6,7,8,9\} \\
\hline
\mathbf{10} & \{(5,0),(2,2)\} & \{4,5\} &
\mathbf{19} & \{(8,1), (5,3), (2,5)\} & \{7,8,9\} \\
\hline 
\end{array}
\]
\caption{Some basic values of $\mathsf Z(n)$ and $\mathsf L(n)$ where $S = \<2,3\>$.}
\label{tb:2gen}
\end{table}

\begin{example}\label{e:3gen}
Patterns in the last example are easy to identify (and we will return to Example~\ref{e:2gen} in our next section), but the reader should not be too complacent, as the two generator case is the simplest possible.  We demonstrate this by producing in Table~\ref{tb:3gen} the same sets, now for the semigroup $S = \<7,10,12\>$.  We make special note that while the length sets in Table~\ref{tb:2gen} are sets of consecutive integers, $\mathsf L(42) = \{4, 6\}$ in Table~\ref{tb:3gen} breaks this pattern.  We will revisit the concept of ``skips'' in length sets at the end of the next section.  
\end{example}

\begin{table}[t]
\[
\begin{array}{||c|c|c||c|c|c||}
\hline
n & \mathsf Z(n) & \mathsf L(n) & n & \mathsf Z(n) & \mathsf L(n) \\
\hline
\mathbf{7} & \{(1,0,0)\} & \{1\} &
\mathbf{30} & \{(0,3,0)\} & \{3\} \\
\hline
\mathbf{10} & \{(0,1,0)\} & \{1\} &
\mathbf{31} & \{(3,1,0),(1,0,2)\} & \{3,4\} \\
\hline
\mathbf{12} & \{(0,0,1)\} & \{1\} & 
\mathbf{32} & \{(0,2,1)\} & \{3\} \\
\hline
\mathbf{14} & \{(2,0,0)\} & \{2\} &
\mathbf{33} & \{(3,0,1)\} & \{4\} \\
\hline
\mathbf{17} & \{(1,1,0)\} & \{2\} &
\mathbf{34} & \{(2,2,0),(0,1,2)\} & \{3,4\} \\
\hline
\mathbf{19} & \{(1,0,1)\} & \{2\} &
\mathbf{35} & \{(5,0,0)\} & \{5\} \\
\hline 
\mathbf{20} & \{(0,2,0)\} & \{2\} &
\mathbf{36} & \{(2,1,1),(0,0,3)\} & \{3,4\} \\
\hline
\mathbf{21} & \{(3,0,0)\} & \{3\} &
\mathbf{37} & \{(1,3,0)\} & \{4\} \\
\hline
\mathbf{22} & \{(0,1,1)\} & \{2\} &
\mathbf{38} & \{(4,1,0), (2,0,2)\} & \{4,5\} \\
\hline 
\mathbf{24} & \{(2,1,0),(0,0,2)\} & \{2,3\} &
\mathbf{39} & \{(1,2,1)\} & \{4\} \\
\hline 
\mathbf{26} & \{(2,0,1)\} & \{3\} &
\mathbf{40} & \{(0,4,0),(4,0,1)\} & \{4,5\} \\
\hline
\mathbf{27} & \{(1,2,0)\} & \{3\} &
\mathbf{41} & \{(3,2,0),(1,1,2)\} & \{4,5\} \\
\hline
\mathbf{28} & \{(4,0,0)\} & \{4\} &
\mathbf{42} & \{(6,0,0), (0,3,1)\} & \{4,6\} \\
\hline
\mathbf{29} & \{(1,1,1)\} & \{3\} &
\mathbf{43} & \{(3,1,1), (1,0,3)\} & \{4,5\} \\
\hline
\end{array}
\]
\caption{Some basic values of $\mathsf Z(n)$ and $\mathsf L(n)$ where $S = \<7,10,12\>$.}
\label{tb:3gen}
\end{table}

Much of the remainder of this paper will focus on the study of $\mathcal Z(S)$ and $\mathcal L(S)$, the complete systems of factorizations and factorizations lengths of $S$.  The next section presents a crash course on definitions and basic results.  We will review some of the significant results in this area, with an emphasis on those obtained in summer and year long REU projects.  Section~\ref{sec:computing} explores the computation tools available to embark on similar studies, and Sections~\ref{sec:asymptotics} and~\ref{sec:random} contain actual student level projects which we hope will peak students' minds and interests.

%!TEX root = 0-ursemigroups.tex

%%%%%%%%%%%%%%%%%%%%%%%%%%%%%%%%%%%%%%%%%%%%%%%%%%%%%%%%%%%%%%%%%%%%%%%%%
\section{A crash course on numerical semigroups}%%%%%%%%%%%%%%%%%%%%%%%%%
\label{sec:overview}%%%%%%%%%%%%%%%%%%%%%%%%%%%%%%%%%%%%%%%%%%%%%%%%%%%%%
%raggedbottom%%%%%%%%%%%%%%%%%%%%%%%%%%%%%%%%%%%%%%%%%%%%%%%%%%%%%%%%%%%%

We start with a momentary return to the notion of minimal generating sets alluded to in Section~\ref{sec:intro}.  Let $S$ be a numerical semigroup, and $m > 0$ its smallest positive element.  We call a generating set $W$ for $S$ \emph{minimal}\index{numerical~semigroup!generating~set!minimal} if $W \subseteq T$ for any other generating set $T$ of $S$.   We claim any minimal generating set for $S$ has at most $m$ elements (and in particular is finite).  Indeed, for each $i$ with $0 \leq i < m$, set
\[
M_i = \{n \in S \,\mid\, n > 0 \mbox{  and  } n \equiv i \bmod m\}.
\]
Note that by the definition of $S$, each $M_i \neq \emptyset$ (in fact, each is infinite).  Moreover, for each $n \in M_i$, we must have $n + m \in M_i$ as well since $S$ is closed under addition.  Hence, setting
\[
n_i = \min M_i
\]
for each $i$, we can write each $M_i = \{n_i + qm \,\mid\, q \ge 0\}$.  This implies $N = \{n_0, \ldots, n_{m-1}\}$ is a generating set for $S$, i.e.,
\[
S = \<N\> = \<n_0, \ldots, n_{m-1}\>,
\]
since the remaining elements of $S$ can each be obtained from an element of $N$ by adding $m = n_0$ sufficiently many times (note that this is precisely the argument used to justify the Big Fact at the beginning of Section~\ref{sec:intro}).  As a consequence, any minimal generating set for $S$ must be a subset of $N$.  

\begin{example}\label{e:mingens}
While the elements of $N$ are chosen with respect to minimality modulo~$m$, the generating set $N$ may not be minimal.   For instance, if 
\[
S = \{0, 5, 8, 10, 13, 15, 16, 18, 20, 21, 23, 24, 25, 26, 28, 29, 30, 31, 32, \ldots\},
\]
then $N = \{ 5, 16, 32, 8, 24\}$, and while $S = \<5, 8, 16, 24, 32\>$, the fact that 
$16 = 2 \cdot 8$, $24 = 3 \cdot 8$, and $32 = 4 \cdot 8$ yields $S = \<5,8\>$.  
\end{example}

Using Example~\ref{e:mingens}, we can reduce $N$ to a minimal generating set as follows.
For each $i$, set $N_i = N \setminus \{n_i\}$, and set
\[
\widehat{N} = \{n_i \,\mid\, n_i \notin \<N_i\>\}.
\]
The following is a good exercise, and implies that every numerical semigroup has a unique minimal generating set.  

\begin{exercise}\label{exe:mingens}
If $S$ is a numerical semigroup and $T$ is any generating set of $S$, then $\widehat{N} \subseteq T$.  In particular, $\widehat{N}$ is the unique minimal generating set of $S$.
\end{exercise}

Exercise~\ref{exe:mingens} and the argument preceding it establish some characteristics of a numerical semigroup $S$ that are widely used in the mathematics literature.  The smallest positive integer in $S$ is called the \emph{multiplicity}\index{numerical~semigroup!multiplicity} of $S$ and denoted by $m(S)$.   The cardinality of $\widehat{N}$ above is called the \emph{embedding dimension}\index{numerical~semigroup!embedding~dimension} of $S$ and is denoted by $e(S)$.  By the argument preceding Exercise~\ref{exe:mingens}, $e(S) \leq m(S)$, and additionally the elements of $\widehat{N}$ are pairwise incongruent modulo $m(S)$.  Moreover, note that $\gcd(\widehat{N}) = 1$, as otherwise $\ZZ_{\ge 0} \setminus S$ would be infinite.

\begin{example}\label{ex5}
Using Exercise~\ref{exe:mingens}, we can set up several obvious classes of numerical semigroups which have garnered research attention.  Suppose that 
$m \ge 2$ and $d \ge 1$ are integers with $\gcd(m,d) = 1$.  Given $k$ with $1 \leq k \leq m-1$, set
\[
A_k = \{m, m + d, \ldots, m + kd\}.
\]
Using elementary number theory, it is easy to see that $A_k$ is the minimal generating set of the numerical semigroup $\<A_k\>$, which is called an \emph{arithmetical numerical semigroup}\index{numerical~semigroup!arithmetical} (since its minimal generating set is an arithmetical sequence).  This is a very large class of numerical semigroups, which contains many important subclasses:
\begin{itemize}
\item
all 2-generated\index{numerical~semigroup!2-generated} numerical semigroups (i.e., $k = 1$);

\item
all numerical semigroups generated by consecutive integers (i.e., $d = 1$); and

\item
numerical semigroups consisting of all positive integers greater than or equal to a fixed positive integer $m$ (i.e., $d = 1$, $k = m - 1$, and $S = \<m, m + 1, \ldots , 2m - 1\>$).

\end{itemize}
The latter subclass consists of all numerical semigroups for which $F(S) < m(S)$.  
\end{example}  

Just as we factor integers as products of primes, or polynomials as products of irreducible factors, we now factor elements in a numerical semigroup $S$ in terms of its minimal generators (in this context, ``factorization'' means an expression of an element of $S$ as a sum of generators, and as we will see, many elements have multiple such expressions).  In terms of $S$, we have already defined the notation $\mathsf Z(n)$, $\mathsf L(n)$, $\mathcal Z(S)$, and $\mathcal L(S)$.  Let's consider some further functions that concretely address structural attributes of these sets.  We~denote the maximum\index{factorization~length!maximum} and minimum\index{factorization~length!minimum} factorization lengths\index{factorization~length} of an element $n \in S$ by 
\[
\ell(n) = \min\mathsf L(n) \quad \mbox{and} \quad L(n) = \max\mathsf L(n).
\] 
These functions satisfy the following recurrence for sufficiently large semigroup elements; we state this result now and revisit it in much more detail in Section~\ref{sec:computing}.  

\begin{theorem}[{\cite[Theorems~4.2 and~4.3]{elastsets}}]\label{t:maxminlen}
If $S = \<n_1, \ldots, n_k\>$ with $n_1 < \cdots < n_k$, then
\[
\ell(n + n_k) = \ell(n) + 1 \quad \mbox{and} \quad L(n + n_1) = L(n) + 1
\]
for all $n > n_{k-1}n_k$.  
\end{theorem}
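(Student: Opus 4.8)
My plan is to treat the two identities in parallel, since they are dual to one another under swapping $n_1 \leftrightarrow n_k$ and $\min \leftrightarrow \max$. For each identity the inequality ``$\le$'' is immediate and the inequality ``$\ge$'' carries all the content. For the minimum: appending one copy of $n_k$ to a shortest factorization of $n$ produces a factorization of $n + n_k$ of length $\ell(n) + 1$, so $\ell(n + n_k) \le \ell(n) + 1$; conversely, \emph{if} some shortest factorization of $n + n_k$ uses $n_k$, then deleting that copy yields a factorization of $n$ of length $\ell(n + n_k) - 1$, whence $\ell(n + n_k) \ge \ell(n) + 1$. The analogous remarks for $L$ use $n_1$ in place of $n_k$. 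Thus everything reduces to a single structural claim: for $n > n_{k-1} n_k$, \emph{every} length-minimizing factorization of $n + n_k$ uses $n_k$, and \emph{every} length-maximizing factorization of $n + n_1$ uses $n_1$.

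To prove the minimizing claim I would argue by contradiction, using a length-shortening trade. Suppose $z$ is a factorization of $m = n + n_k$ that avoids $n_k$; then every part of $z$ lies among $n_1, \ldots, n_{k-1}$ and so is at most $n_{k-1}$, and its length $N$ satisfies $n_{k-1} N \ge m > n_{k-1} n_k$, forcing $N > n_k$. Listing the $N$ generators appearing in $z$ in any order and forming the $N + 1$ partial sums, the bound $N + 1 > n_k$ guarantees that two of these partial sums are congruent modulo $n_k$; their difference is the value of a nonempty block of parts equal to a positive multiple $c\,n_k$ of $n_k$. Since each part of that block is strictly smaller than $n_k$, the block contains strictly more than $c$ parts, so replacing it by $c$ copies of $n_k$ preserves the value while strictly decreasing the length. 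Hence $z$ was not of minimum length, which proves the claim.

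The maximizing claim is the mirror image, run modulo $n_1$: a factorization of $m = n + n_1$ avoiding $n_1$ has all parts at least $n_2$, so its length $N$ satisfies $n_k N \ge m > n_{k-1} n_k$ and hence $N > n_{k-1} \ge n_1$; the $N + 1$ partial sums then collide modulo $n_1$, yielding a block of value $c\,n_1$ built from strictly fewer than $c$ parts (each exceeding $n_1$), and swapping that block for $c$ copies of $n_1$ strictly \emph{increases} the length. I expect the main obstacle to be isolating the correct trade and verifying that it moves length in the intended direction: the comparison between the block's part-count and $c$ depends on each traded part being strictly smaller than $n_k$ (respectively strictly larger than $n_1$), and the hypothesis $n > n_{k-1} n_k$ is precisely what pushes the number of partial sums past the number of available residues so that the pigeonhole collision is forced. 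Once this trade and the accompanying counting bound are in hand, both recurrences follow from the reductions in the first paragraph.
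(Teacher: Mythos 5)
Your argument is correct, and it follows exactly the strategy this paper sketches (the paper itself gives no proof of Theorem~\ref{t:maxminlen}, citing \cite{elastsets} and posing the proof as a challenge problem in Section~\ref{sec:computing}, where the stated key is that every extremal-length factorization of a sufficiently large element must use the relevant extreme generator). Your reduction to that structural claim is the intended one, and the pigeonhole on partial sums modulo $n_k$ (resp.\ $n_1$), followed by trading the resulting block of value $cn_k$ (resp.\ $cn_1$) for $c$ copies of the extreme generator, correctly supplies the missing step: the counting comparisons ($>c$ parts traded for $c$, resp.\ $<c$ for $c$) go in the right directions, and the hypothesis $n > n_{k-1}n_k$ is used exactly where needed to force $N > n_k$ (resp.\ $N > n_{k-1} \ge n_1$) so that the collision of partial sums is guaranteed.
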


The \emph{elasticity}\index{elasticity} of a nonzero element $n \in S$, denoted $\rho(n)$, measures the deviation between $\ell(n)$ and $L(n)$ and is defined as 
\[
\rho(n) = L(n)/\ell(n).
\]
The \emph{elasticity} of $S$ is then defined as
\[
\rho(S) =\sup \{\rho(n) \mid n \in S \}.
\]
The elasticity of a semigroup element measures the ``spread'' of its factorization lengths.  One of the advantages of defining the elasticity of an element $n$ as the quotient of the maximum and minimum lengths (as opposed to, say, their difference) is that one cannot obtain larger elasticity values ``for free'' by simply taking multiples of $n$.  Indeed, if $\ell(n) = 3$ and $L(n) = 5$, then $2n$ has factorizations of length $6$ and $10$ obtained by concatenating factorizations of $n$ of minimum and maximum length, respectively.  The only way for $\rho(2n)$ to exceed $\rho(n)$ is for $2n$ to have ``new'' factorizations not obtained from those of $n$.  

We introduce two more terms before exploring an in-depth example.  When the supremum in this expression is attained (i.e., there exists $n \in S$ with $\rho(n) = \rho(S)$) we call the elasticity of $S$ \emph{accepted}\index{elasticity!accepted}.   We say that $S$ is \emph{fully elastic}\index{elasticity!fully~elastic} if for every rational $q \in \mathbb{Q} \cap [1,\rho(S))$ (or $[1,\infty)$ if the elasticity is infinite) there exists a nonzero $n \in S$ such that $\rho(n) = q$.

\begin{example}\label{e:elasticity}
We return to the basic semigroup $S = \<2,3\>$ in Example~\ref{e:2gen} to offer some examples of the calculations thus far suggested.
Hence, each factorization of $n \in S$ has the form
\[
n = 2x_1 + 3x_2.
\] 
Table~\ref{tb:2gen} suggests that factorizations of a given element of $S$ are far from unique in general.  Notice that in $S$, the longest factorization of an element $n \in S$ contains the most possible copies of $2$ and the shortest the most possible copies of $3$.  This is the intuition behind Theorem~\ref{t:maxminlen}:\ for large semigroup elements, a maximum length factorization for $n + 2$ can be obtained a maximum length factorization for $n$ by adding a single copy of $2$.  

Using this fact and some elementary number theory, explicit formulas for all the invariants discussed to this point can be worked out for arbitrary elements of $S$.  For~instance, for all $n \in S$ we have that
\[
\ell(n) = \left\lceil \frac{n}{3} \right\rceil \quad \mbox{and} \quad L(n) = \left\lfloor
\frac{n}{2} \right\rfloor,
\] 
and thus
\[
\rho(n) = \left\lfloor \frac{n}{2} \right\rfloor \big/ \left\lceil
\frac{n}{3} \right\rceil.
\]
Using the fact that 2 copies of the generator 3 can be exchanged for 3 copies of the generator 2 in any factorization in $S$, we 
obtain for $n \geq 4$ that
\[
\mathsf L(n) = \bigg\{\!\!
\left\lceil \frac{n}{3} \right\rceil, 
\left\lceil \frac{n}{3} \right\rceil + 1, \ldots,
\left\lfloor \frac{n}{2} \right\rfloor - 1,
\left\lfloor \frac{n}{2} \right\rfloor
\!\!\bigg\}.
\]
Using the notation $[x,y] = \{z \in \ZZ \,\mid\, x \leq z \leq y \}$ for $x \leq y$ integers, we conclude
\[
\mathcal L(S) = \big\{\! \{1\}, \big[ \lceil \tfrac{4}{3}\rceil, \lfloor \tfrac{4}{2} \rfloor \big],
\big[ \lceil \tfrac{5}{3}\rceil, \lfloor \tfrac{5}{2} \rfloor\big],\ldots \!\big\}.
\] 
Moreover, it is easy to verify that $\rho(n) \leq 3/2$ for all $n \in S$
and that $\rho(n) = 3/2$ if and only if $n \equiv 0 \bmod 6$.  Thus $\rho(S) = 3/2$ and
the elasticity is accepted.  
\end{example}

Though a comparable analysis of elasticities for $S = \<7,10,12\>$ is out of reach, we offer in Figure~\ref{fig:elasticity} a graph of the elasticity values for $S = \<7,10,12\>$ to give the reader a feel for how the elasticity behaves for large elements of $S$.

\begin{figure}[t]
\begin{center}
\includegraphics[width=4.5in]{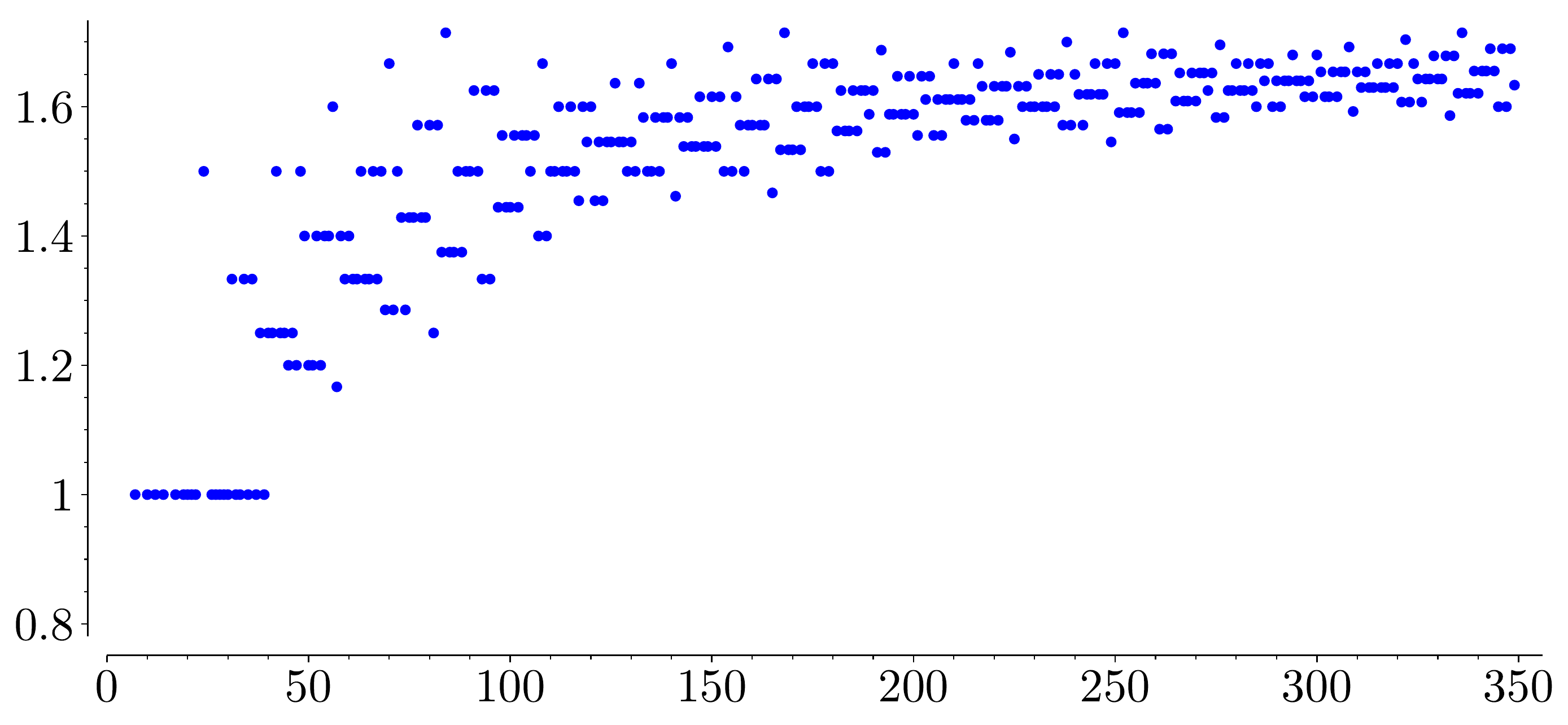}
\end{center}
\caption[Elasticity values for elements of $S = \<7,10,12\>$]{Elasticity values for elements of $S = \<7,10,12\>$.}
\label{fig:elasticity}
\end{figure}

Many basic results concerning elasticity in numerical semigroups are worked out in the
paper~\cite{CHM}, which was a product of a summer REU program.  We review, with proof,
three of that paper's principal results, the first of which yields an exact calculation for the elasticity of $S$.  

\begin{theorem}[{\cite[Theorem 2.1]{CHM}}]\label{t:elasticity}
Let $S = \<n_1, \ldots, n_k\>$ be a numerical semigroup, where $n_1 < n_2 < \cdots < n_k$ is a minimal set of
generators for $S$.  Then $\rho(S) = n_k/n_1$ and the elasticity of $S$ is accepted.
\end{theorem}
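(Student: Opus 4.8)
The plan is to prove the two inequalities $\rho(S) \le n_k/n_1$ and $\rho(S) \ge n_k/n_1$ separately, arranging the second so that it simultaneously witnesses that the defining supremum is attained.

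First I would establish the upper bound by controlling both extremal lengths of an arbitrary element at once. Fix a nonzero $n \in S$ and let $(x_1, \ldots, x_k) \in \mathsf Z(n)$ be any factorization, of length $\lambda = \sum_{i=1}^k x_i$. Because $n_1 \le n_i \le n_k$ for every $i$, we have
\[
n_1 \lambda = n_1 \sum_{i=1}^k x_i \le \sum_{i=1}^k x_i n_i = n \le n_k \sum_{i=1}^k x_i = n_k \lambda,
\]
so every factorization length of $n$ lies in the interval $[\,n/n_k,\ n/n_1\,]$. Maximizing and minimizing over $\mathsf Z(n)$ then gives $L(n) \le n/n_1$ and $\ell(n) \ge n/n_k$, whence
\[
\rho(n) = \frac{L(n)}{\ell(n)} \le \frac{n/n_1}{n/n_k} = \frac{n_k}{n_1}.
\]
Since this holds for every nonzero $n \in S$, taking the supremum yields $\rho(S) \le n_k/n_1$.

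Next I would produce a single element realizing the bound, which at once gives the reverse inequality and shows the elasticity is accepted. The natural choice is $n = n_1 n_k \in S$. The factorization $(n_k, 0, \ldots, 0)$ has length $n_k = n/n_1$, so it meets the upper bound on $L(n)$ derived above, forcing $L(n) = n/n_1 = n_k$; symmetrically the factorization $(0, \ldots, 0, n_1)$ has length $n_1 = n/n_k$, meeting the lower bound on $\ell(n)$ and forcing $\ell(n) = n/n_k = n_1$. Hence $\rho(n_1 n_k) = n_k/n_1$, so that $\rho(S) \ge n_k/n_1$ and the supremum is attained by $n_1 n_k$.

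The argument is short, so there is no serious obstacle; the one point worth isolating is that the \emph{same} element $n_1 n_k$ realizes both extremal lengths at once, which is precisely what upgrades the bound from an approached supremum to an accepted one (no limiting sequence of elements is required). I would also remark that minimality of the generating set enters only to pin down $n_1$ and $n_k$ as the smallest and largest generators — the length inequalities in the first step are valid for any generating set whatsoever.
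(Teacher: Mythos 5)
Your proposal is correct and follows essentially the same route as the paper's proof: the same chain of inequalities $n/n_k \le \sum x_i \le n/n_1$ giving the upper bound, and the same witness element $n_1 n_k$ whose two obvious factorizations realize $L(n_1n_k) = n_k$ and $\ell(n_1n_k) = n_1$, yielding equality and acceptance at once. The paper simply states $\rho(n_1n_k) = n_k/n_1$ without spelling out the two extremal factorizations as you do.
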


\begin{proof}
If $n \in S$ and $n = x_1 n_1 + \ldots + x_kn_k$, then
\[
\frac{n}{n_k} = \frac{n_1}{n_k}x_1 + \ldots + \frac{n_k}{n_k}x_k \le
x_1 + \ldots + x_k \le \frac{n_1}{n_1}x_1 + \ldots + \frac{n_k}{n_1}x_k
 = \frac{n}{n_1}.
\]
Thus $L(n) \le n/n_1$ and $l(n) \ge n/n_k$ for all $n \in S$, so we can conclude $\rho(S) \le n_k/n_1$.  Also, $\rho(S) \ge \rho(n_1 n_k) = n_k/n_1$, yielding equality and acceptance. 
\hfill $\qed$
\end{proof}

We now answer the question of full elasticity in the negative.

\begin{theorem}[{\cite[Theorem 2.2]{CHM}}]\label{t:notfullyelastic}
If $S = \<n_1, \ldots, n_k\>$ be a numerical semigroup, where $2 \le n_1 < \ldots < n_k$ and $k \ge 2$, then $S$ is not fully elastic.  
\end{theorem}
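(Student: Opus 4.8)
The plan is to exploit the fact that the elasticities $\rho(n)$ of large elements all cluster near $\rho(S)$, which leaves only finitely many elasticity values available in any closed subinterval of $[1,\rho(S))$ bounded away from $\rho(S)$. Since such a subinterval already contains infinitely many rationals, some rational must be missed, and that is exactly what full elasticity forbids. Concretely, I would first record that $\rho(S) = n_k/n_1 > 1$ by Theorem~\ref{t:elasticity} (the hypotheses $k \ge 2$ and $2 \le n_1 < \cdots < n_k$ force $n_k > n_1$, so the interval $[1,\rho(S))$ is genuinely nondegenerate), and then fix any rational $q_0$ with $1 < q_0 < \rho(S)$.

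The key technical step is to establish that $\lim_{n \to \infty} \rho(n) = \rho(S)$, and for this I would lean on Theorem~\ref{t:maxminlen}. Its recurrence $\ell(n+n_k) = \ell(n)+1$, valid for all $n > n_{k-1}n_k$, says that along each residue class modulo $n_k$ the minimum length $\ell$ is eventually exactly linear in $n$ with slope $1/n_k$; comparing against the finitely many ``base'' values, one on each residue class, produces a constant $C$ with $|\ell(n) - n/n_k| \le C$ for all large $n$, that is, $\ell(n) = n/n_k + O(1)$. The companion recurrence $L(n+n_1)=L(n)+1$ yields $L(n) = n/n_1 + O(1)$ by the identical argument. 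Dividing the two estimates gives $\rho(n) = L(n)/\ell(n) \to (1/n_1)/(1/n_k) = n_k/n_1 = \rho(S)$.

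With the limit in hand the conclusion is immediate. Taking $\varepsilon = \rho(S) - q_0 > 0$, there is an $N$ such that $\rho(n) > q_0$ for every $n > N$, so only finitely many nonzero $n \in S$ satisfy $\rho(n) \le q_0$, and hence only finitely many distinct values $\rho(n)$ can lie in $[1,q_0]$. Since $(1,q_0)$ contains infinitely many rationals, I may choose one, say $q$, that is not among these finitely many realized values; then $q \in \QQ \cap [1,\rho(S))$ is the elasticity of no element, so $S$ is not fully elastic. I expect the main obstacle to be making the ``$O(1)$'' estimates honest: one must check that Theorem~\ref{t:maxminlen} controls $\ell$ and $L$ on \emph{every} residue class simultaneously, not merely along a single arithmetic progression, so that the error term is uniformly bounded. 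That uniform bound is precisely what converts the two recurrences into the limit that drives the whole argument.
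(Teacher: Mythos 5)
Your proof is correct and follows essentially the same route as the paper's: both rest on Theorem~\ref{t:maxminlen} to show that all but finitely many elements have elasticity bounded away from any fixed rational $q_0 < \rho(S)$, and then observe that the infinitely many rationals below $q_0$ cannot all be realized. The only difference is in execution --- the paper uses a mediant-style monotonicity of $\rho$ along progressions of step $n_1n_k$, while you extract the limit $\rho(n) \to n_k/n_1$ directly from the $O(1)$ estimates on $\ell$ and $L$ over all residue classes; both are sound, and your uniform treatment of the residue classes is, if anything, slightly more careful than the paper's phrasing.
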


\begin{proof}
% Suppose without loss of generality that $n_1 < \ldots < n_k$.  
Let $N = n_{k-1}n_k + n_1n_k$.  By Theorem~\ref{t:maxminlen}, for each $n > n_{k-1}n_k$, we have
\[
\rho(n + n_1n_k)
= \frac{L(n + n_1n_k)}{\ell(n + n_1n_k)}
= \frac{L(n) + n_k}{\ell(n) + n_1}
\ge \frac{L(n)}{\ell(n)}
= \rho(n)
\]
since $\rho(n) \le n_k/n_1$ by Theorem~\ref{t:elasticity}.  As such, there are only finitely many elements with elasticity less than $\rho(N)$, so $S$ cannot be fully elastic.  \hfill $\qed$
\end{proof}

The proof of Theorem~\ref{t:notfullyelastic} can be used to prove a result
which is of its own interest.  For a numerical semigroup $S$, let
\[
R(S) = \{\rho(n) \,\mid\, n\in S\}.
\] 

\begin{corollary}[{\cite[Corollary 2.3]{CHM}}]\label{c:onelimitpoint}
For any numerical semigroup $S$, the only limit point of $R(S)$ is $\rho(S)$.
\end{corollary}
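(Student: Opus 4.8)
The plan is to prove the sharper statement that $\rho(n) \to \rho(S) = n_k/n_1$ as $n \to \infty$ through elements of $S$; the corollary then follows almost immediately. The engine is the recurrence of Theorem~\ref{t:maxminlen}, exactly as used in the proof of Theorem~\ref{t:notfullyelastic}. Iterating $L(\,\cdot\, + n_1) = L(\,\cdot\,) + 1$ a total of $n_k$ times and $\ell(\,\cdot\, + n_k) = \ell(\,\cdot\,) + 1$ a total of $n_1$ times (all intermediate arguments stay above $n_{k-1}n_k$, so the recurrence always applies) establishes
\[
L(n + j\,n_1 n_k) = L(n) + j\,n_k \qquad\text{and}\qquad \ell(n + j\,n_1 n_k) = \ell(n) + j\,n_1
\]
for every $n > n_{k-1}n_k$ and every $j \ge 0$.

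Next I would parametrize the large elements. Every $n > n_{k-1}n_k + n_1 n_k$ can be written as $n = r + j\,n_1 n_k$ with $r$ lying in the fixed finite window $(n_{k-1}n_k,\ n_{k-1}n_k + n_1 n_k]$ and $j \ge 1$. The displayed formulas then give
\[
\rho(n) = \frac{L(r) + j\,n_k}{\ell(r) + j\,n_1}.
\]
Since $r$ ranges over a finite set, the values $L(r)$ and $\ell(r)$ are bounded, while $j \to \infty$ as $n \to \infty$; hence $\rho(n) \to n_k/n_1 = \rho(S)$, uniformly over the finitely many admissible $r$. This proves $\lim_{n\to\infty}\rho(n) = \rho(S)$.

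From this limit one inclusion is immediate. If $c$ is a limit point of $R(S)$, pick distinct values $r_j \in R(S)$ with $r_j \to c$ and write $r_j = \rho(m_j)$ for distinct $m_j \in S$. These $m_j$ cannot stay bounded (a bounded set of integers is finite and yields only finitely many values), so $m_j \to \infty$ along a subsequence, forcing $c = \rho(S)$. Thus no value other than $\rho(S)$ can be a limit point.

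The reverse inclusion — that $\rho(S)$ genuinely \emph{is} a limit point, rather than an isolated attained value sitting at the top of $R(S)$ — is the part I expect to be the main obstacle, since the limit computation itself is routine once Theorem~\ref{t:maxminlen} is in hand. To settle it I would exhibit infinitely many distinct elements of $R(S)$ strictly below $\rho(S)$. Treating $j \mapsto (L(r)+j\,n_k)/(\ell(r)+j\,n_1)$ as a function of a real variable, its derivative has sign equal to that of $n_k\ell(r) - n_1 L(r)$, so it is strictly increasing exactly when $\rho(r) < \rho(S)$; along any progression starting from such an $r$ the elasticities are then strictly increasing, distinct, and convergent to $\rho(S)$. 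It remains only to produce one $r > n_{k-1}n_k$ with $\rho(r) < \rho(S)$: were $\rho(n) = \rho(S)$ for all large $n$, then $n_1 L(n) = n_k \ell(n)$ for all large $n$, and subtracting this relation at $n+n_1$ and at $n$ while using $L(n+n_1) = L(n)+1$ would force $\ell(n+n_1) - \ell(n) = n_1/n_k \notin \ZZ$, contradicting the integrality of $\ell$. (Throughout I assume embedding dimension $k \ge 2$, as in Theorem~\ref{t:notfullyelastic}; for $S = \ZZ_{\ge 0}$ one has $R(S) = \{1\}$, which has no limit points and the statement is vacuous.)
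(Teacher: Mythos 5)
Your argument is correct and runs on the same engine as the paper's proof: iterate Theorem~\ref{t:maxminlen} to show that along each arithmetic progression of step $n_1n_k$ the elasticities form a monotone sequence converging to $n_k/n_1$. The paper phrases this as a decomposition of $R(S)$ into a finite set plus $n_1n_k$ monotone increasing sequences with common limit $n_k/n_1$; you instead extract the equivalent statement $\lim_{n\to\infty}\rho(n)=\rho(S)$ and deduce uniqueness of the limit point from it, which is the same computation. Where you genuinely diverge is in showing that $\rho(S)$ actually \emph{is} a limit point. The paper does this directly: for $n = an_1n_k + n_1$ the bounds $L(n)\le n/n_1$ and $\ell(n)\ge n/n_k$ from the proof of Theorem~\ref{t:elasticity} force $L(n)=an_k+1$ and $\ell(n)=an_1+1$ (both attained by obvious factorizations), so $\rho(n)=(an_k+1)/(an_1+1)$ is an explicit strictly increasing sequence in $R(S)$ with limit $n_k/n_1$. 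Your route rules out the degenerate case $\rho(n)\equiv\rho(S)$ by the integrality contradiction $\ell(n+n_1)-\ell(n)=n_1/n_k\notin\ZZ$; this is valid and rather slick, but it buys with an indirect argument what the paper gets from a one-line explicit family, and the paper's family has the added benefit of exhibiting concrete witnesses. One point you share with the paper but state more explicitly, and which neither of you justifies: writing a large $n\in S$ as $r+jn_1n_k$ with $r$ in the window $(n_{k-1}n_k,\,n_{k-1}n_k+n_1n_k]$ requires $r\in S$ for $L(r)$ and $\ell(r)$ to be defined and for the iteration to apply. This is true, but it needs a short argument (e.g., trading subsets of larger generators for copies of $n_1$ to show $n-n_1n_k\in S$ for $n\in S$ sufficiently large); it is exactly the coverage claim the paper also asserts without proof when it says the $n_1n_k$ sequences account for all elements above $n_{k-1}n_k+n_1n_k$, so I would not count it against you, but it is worth writing down.
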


\begin{proof}
Let $n_1 < n_2 < \cdots < n_k$ be a minimal set of generators for the numerical semigroup $S = \<n_1, \ldots, n_k\>$, where $k \ge 2$.  If $n = a(n_1 n_k) + n_1$ for $a \in \ZZ_{\ge 0}$, then 
\[
\rho(n) = \frac{L(n)}{l(n)} = \frac{an_k + 1}{an_1 + 1}.
\]
It follows that $\rho(n) < n_k/n_1$ for all $a \in \ZZ_{\ge 0}$ and $\lim_{a \to \infty} \rho(n) = n_k/n_1$, making $n_k/n_1$ a limit point of the set $R(S)$.  Additionally, by Theorem~\ref{t:maxminlen}, for $n > n_{k-1}{n_k}$ and $a \ge 1$ we have 
\[
\rho(n + an_1n_k) = \frac{L(n + an_1n_k)}{l(n + an_1n_k)} = \frac{L(n) + an_k}{L(n) + an_1},
\]
meaning $R(S)$ is the union of a finite set (elasticities of the elements less than $n_{k-1}n_k + n_1n_k$) and a union of $n_1n_k$ monotone increasing sequences approaching $n_k/n_1$.  As such, we conclude $n_k/n_1$ is the only limit point.  \hfill $\qed$
% NOTATION IS OLD, NEED $a_i \to n_i$, $t \to k$
% 
% We now show $\frac{a_t}{a_1}$ is the only limit point of this set.
% Let $r \in [1, \frac{a_t}{a_1})$, let 
% \[
% s = a_1 a_2 + \ldots + a_{t-1} a_t
% \]
% and take $N$ to be an integer greater than $\frac{(r+1)s a_t}{a_t - r a_1}$
% (which is positive by the restrictions on $r$).
% 
% By the proof of Theorem \ref{t:notfullyelastic}, for all $n > N$,
% \[
% \rho(n) > \frac{N a_t - s a_t}{N a_1 + s a_t}.
% \]
% The reader can verify that
% $
% N > \frac{(r+1)s a_t}{a_t - r a_1}$ implies
%     $\frac{N a_t - s a_t}{N a_1 + s a_t} > r$
% and so $\rho(n) > r$.
% Thus there are at most $N$ elements of $S$ which have elasticity $r$ or less.
% Since there are a finite number of elasticities less than $r$ there can be
% no limit points less than $r$.  Since this is true of any
% $r \in [1,\frac{a_t}{a_1})$, there are no limit points other than
% $\frac{a_t}{a_1}$. \hfill $\qed$
\end{proof}

\noindent
The original proofs in~\cite{CHM} did not use Theorem~\ref{t:maxminlen}, and were much more technical.  The proofs given above are a consequence of a complete description of $R(S)$ in~\cite{elastsets}, a recent paper with an undergraduate co-author in which Theorem~\ref{t:maxminlen} first appeared.  
% The set $R(S)$ has been given a complete description in a recent paper with an undergraduate co-author~\cite{elastsets}.

%%%%%%%%%%%%%%%%%%%%%%%%%%%%%%
%%%%%%%%%%%%%%%%%%%%%%%%%%%%%%%

The elasticity does lend us information concerning the structure of the length set, but only limited information.  While it deals with the maximum and minimum length values, it does not explore the finer structure of $\mathsf L(n)$ (or more generally of~$\mathsf Z(n)$).  There are several invariants studied in the theory of non-unique factorizations which yield more refined information - we introduce one such measure here, which is known as the delta set\index{delta~set}.

Let $S = \<n_1, \ldots, n_t\>$ be a numerical semigroup, where $n_1, \ldots, n_k \in \mathbb{N}$ minimally generate $S$ and $k \ge 2$. 
If $\mathsf L(n) = \{\ell_1, \ldots , \ell_t\}$ with the $\ell_i$'s listed in increasing order, then set
\[
\Delta(n) = \{\ell_i - \ell_{i-1} \,\mid\, 2 \leq i\leq t\}\]
and
\[
\Delta(S) = \bigcup_{0 < n \in S} \Delta(n).
\]
Our hypothesis that $t \geq 2$ ensure $\Delta(S) \neq \emptyset$, since (for instance) if $n = n_1n_2$, then both $n_1$ and $n_2 \in \mathsf L(n)$.  Also, for each $n \in S$, since $|\mathsf L(n)| < \infty$ by Exercise~\ref{exe:mingens}, we clearly have $|\Delta(n)| < \infty$ as well.  

\begin{example}\label{e:deltaset}
We use the calculations already presented in Example~\ref{e:elasticity}.  For $S = \<2,3\>$, our formula for $\mathsf L(n)$ yields for all $n \in S$ that
\[
\Delta(n) = \{1\} \quad \mbox{and thus} \quad \Delta(S)=\{1\}.
\]
Calculations for $S = \<7,10,12\>$ in Example~\ref{e:3gen} require advanced techniques.  From Table~\ref{tb:3gen} we have that $\Delta(24) = \{1\}$ while $\Delta(42)=\{2\}$.  Thus
\[
\Delta(S) \supseteq \{1,2\}.
\]
Using \cite[Corollary 2.3]{BCKR}, we obtain $\max\Delta(S) \le 2$, which yields equality.  
\end{example}

Many basic results concerning the structure of the delta set of a numerical semigroup can be found in \cite{BCKR} (another paper that is the product of an REU project).  The publication of \cite{BCKR} led to a long series of papers devoted to the study of delta sets and related properties in numerical semigroups, which approach delta sets from both theoretical and computational standpoints.  In our bibliography, we offer a subset of this list of papers that include undergraduate co-authors (\cite{ACHP, CDHK, CGLMS, CHK, CKLNZ, COn18, CK}). 

Before proceeding, we will need two fundamental results.  While we state these results in terms of numerical semigroups, they are actually valid for any 
affine semigroup\index{affine~semigroup} (i.e., a subset $S \subset \ZZ_{\ge 0}^d$ closed under vector addition and finitely generated).  We omit the proofs, but invite interested readers to construct proofs specifically for the numerical semigroup setting.  The first merely establishes the finiteness of $\Delta(S)$; proofs can be found in both \cite[Proposition 2.3]{BCKR} or \cite[Theorem 2.5]{CGLMS}.  Another proof can be constructed using our still to come Theorem \ref{t:deltaperiodic}. 

\begin{proposition}\label{p:finitedelta}
If $S$ is a numerical semigroup, then $|\Delta(S)| <\infty$.
\end{proposition}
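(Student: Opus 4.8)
The plan is to bound $\max\Delta(S)$. Since every element of $\Delta(S)$ is a difference of two \emph{distinct} factorization lengths, we have $\Delta(S)\subseteq\ZZ_{\ge 0}$ with all elements positive, so finiteness of $\Delta(S)$ is equivalent to producing a single bound $B$ with $\Delta(n)\subseteq[1,B]$ for every $n\in S$. (The case $k=1$, i.e. $S=\ZZ_{\ge 0}$, is trivial since then $\Delta(S)=\emptyset$, so assume $k\ge 2$.) Fix the minimal generating set $n_1<\cdots<n_k$ and regard each factorization of $n$ as a vector in $\ZZ_{\ge 0}^k$, with \emph{length} $|z|=x_1+\cdots+x_k$ for $z=(x_1,\dots,x_k)$. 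The strategy is to move between factorizations of the same element by local exchanges whose effect on length is uniformly controlled, and then to show no gap in $\mathsf L(n)$ can exceed that control.

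First I would set up the exchanges. If $z,z'\in\mathsf Z(n)$, then $w=z-z'$ lies in the relation lattice
\[
\mathsf{Rel}(S)=\{w\in\ZZ^k : w_1n_1+\cdots+w_kn_k=0\},
\]
and writing $w=w^+-w^-$ with $w^+,w^-\in\ZZ_{\ge 0}^k$ of disjoint support defines a \emph{trade} that removes the subexpression $w^-$ and inserts $w^+$. The crucial structural input is that finitely many trades suffice to connect all fibers at once: there is a finite set $G\subseteq\mathsf{Rel}(S)$ (a Markov basis) such that for every $n\in S$, any two elements of $\mathsf Z(n)$ are joined by a sequence of trades drawn from $G$ that remains inside $\mathsf Z(n)$ at every step. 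Setting $B=\max_{g\in G}|g_1+\cdots+g_k|$, each single trade alters the length by at most $B$.

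Granting this, the gap bound follows by a crossing argument that needs no monotonicity. Let $\lambda<\lambda'$ be consecutive values of $\mathsf L(n)$ and choose $z,z'\in\mathsf Z(n)$ with $|z|=\lambda$ and $|z'|=\lambda'$; connect them by a $G$-path $z=z_0,\dots,z_m=z'$ whose lengths $a_0=\lambda,\dots,a_m=\lambda'$ all lie in $\mathsf L(n)$ and satisfy $|a_{t+1}-a_t|\le B$. Let $t$ be the least index with $a_t>\lambda$ (it exists since $a_m=\lambda'>\lambda$). Then $a_{t-1}\le\lambda$, while $a_t\in\mathsf L(n)$ together with $a_t>\lambda$ forces $a_t\ge\lambda'$ because $\lambda'$ is the next length after $\lambda$. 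Hence $\lambda'\le a_t\le a_{t-1}+B\le\lambda+B$, giving $\lambda'-\lambda\le B$. Since $n$ and the consecutive pair were arbitrary, $\max\Delta(S)\le B$, so $\Delta(S)\subseteq[1,B]$ is finite.

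The heart of the matter, and the step I expect to be the main obstacle, is the existence of the finite connecting set $G$ with bounded length change. This is where I would invoke a Dickson's-lemma / Hilbert-basis finiteness: a minimal generating set of the defining toric congruence of $S$ is finite and its binomials connect every fiber through nonnegative lattice points, which is exactly the Markov-basis statement above. Alternatively — and closer to the tools already developed here — one can bypass $G$ entirely by proving that the function $n\mapsto\Delta(n)$ is eventually periodic via the endpoint recurrences of Theorem~\ref{t:maxminlen} (the periodicity route mentioned after the statement); this exhibits $\Delta(S)$ as a finite union of the finite sets $\Delta(n)$ ranging over one period together with the finitely many small values of $n$, again forcing $|\Delta(S)|<\infty$.
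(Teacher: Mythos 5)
Your argument is correct, but note that the paper itself does not prove Proposition~\ref{p:finitedelta}: it explicitly omits the proof, pointing to \cite[Proposition 2.3]{BCKR} and \cite[Theorem 2.5]{CGLMS}, and remarks that a proof can also be extracted from Theorem~\ref{t:deltaperiodic}. Your primary route --- a finite set $G$ of trades (a Markov basis, equivalently a finite presentation of the semigroup, which exists by R\'edei's theorem / Noetherianity of the defining toric ideal) together with the crossing argument showing that no gap between consecutive lengths can exceed $B=\max_{g\in G}\lvert g_1+\cdots+g_k\rvert$ --- is sound; the crossing step is handled carefully (the first time the path's length exceeds $\lambda$ it must jump all the way to at least $\lambda'$, yet by at most $B$), and you correctly flag the existence of $G$ with fiber-connectivity through nonnegative points as the one nontrivial input rather than treating it as free. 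This is essentially the Betti-element approach of the cited \cite{CGLMS}, since the trades of a minimal presentation are supported on the finitely many Betti elements; what it buys over the periodicity route is an explicit uniform bound $\max\Delta(S)\le B$ valid for \emph{all} $n$, with no ``eventually'' caveat and no need for the quantitative threshold $N$ of Theorem~\ref{t:deltaperiodic}. Your fallback via Theorem~\ref{t:deltaperiodic} is exactly the alternative the paper suggests and is also fine, provided you note (as you implicitly do via Exercise~\ref{exe:mingens}) that each individual $\Delta(n)$ is finite so that a finite union of them is finite. The only thing I would press you on is to either prove or precisely cite the finite-presentation/Markov-basis fact, since as written it is asserted rather than established, and it genuinely is the entire content of the proposition.
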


The second result is a deeper structure theorem concerning delta sets, due to Geroldinger, and a general proof can be found in \cite[Lemma~3]{ag3}.  

\begin{proposition}\label{p:deltagcd}
If $S$ is a numerical semigroup, then
\[
\min\,\Delta(S) = \gcd\,\Delta(S).
\]
Hence, if $d = \gcd\,\Delta(S)$, then
\[
\Delta(S) \subseteq \{d, 2d, \ldots, ad\}
\]
for some $a \in \ZZ_{\ge 0}$.  
\end{proposition}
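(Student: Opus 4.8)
The plan is to realize every element of $\Delta(S)$ as the coordinate sum of an integer relation among the generators, and then exploit the fact that these coordinate sums form a subgroup of $\ZZ$. Write $S = \<n_1, \ldots, n_k\>$ with $k \ge 2$ minimal generators, and let
\[
\Lambda = \{v \in \ZZ^k \,\mid\, v_1 n_1 + \cdots + v_k n_k = 0\}
\]
be the lattice of relations among the generators. For $v \in \ZZ^k$ write $|v| = v_1 + \cdots + v_k$ for its coordinate sum, and note that $\phi(v) = |v|$ defines a group homomorphism $\Lambda \to \ZZ$, so its image is a subgroup $c\ZZ$ for a unique integer $c \ge 0$. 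The whole argument hinges on identifying this $c$ with both $\min\Delta(S)$ and $\gcd\Delta(S)$.

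First I would show $\Delta(S) \subseteq \{|v| \,\mid\, v \in \Lambda\}$. Given $\delta \in \Delta(S)$, it arises as a difference $\ell_i - \ell_{i-1}$ of consecutive lengths of some $n \in S$; choosing factorizations $z, z' \in \mathsf Z(n)$ with $|z| = \ell_i$ and $|z'| = \ell_{i-1}$, the vector $v = z - z'$ lies in $\Lambda$ and satisfies $|v| = \ell_i - \ell_{i-1} = \delta$. Hence every element of $\Delta(S)$ lies in $c\ZZ$, which immediately gives $c \mid \gcd\Delta(S)$ and $c \le \min\Delta(S)$ (both using that $\Delta(S) \ne \emptyset$, guaranteed by $k \ge 2$, so that $c \ge 1$).

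The crux is the reverse inequality: I must produce an element of $S$ whose length set has a consecutive gap of exactly $c$, that is, $c \in \Delta(S)$. For this I would take $v_0 \in \Lambda$ with $|v_0| = c$ (which exists since $c$ generates the image of $\phi$) and split it into its positive and negative parts, $v_0 = u - w$ with $u, w \in \ZZ_{\ge 0}^k$ of disjoint support. Then $u$ and $w$ are genuine factorizations of the common element $n_0 = u_1 n_1 + \cdots + u_k n_k$, with $|u| - |w| = c$. The key observation is that no length of $n_0$ can lie strictly between $|w|$ and $|u|$: any intermediate length would create a consecutive gap in $\mathsf L(n_0)$ that is a positive element of $c\ZZ$ strictly smaller than $c$, which is absurd. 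Therefore $|w|$ and $|u|$ are consecutive in $\mathsf L(n_0)$ and $c = |u| - |w| \in \Delta(n_0) \subseteq \Delta(S)$, so $\min\Delta(S) \le c$. Combining the inequalities yields $\min\Delta(S) = c$, and since $c$ divides every element of $\Delta(S)$ while $c \in \Delta(S)$, also $\gcd\Delta(S) = c$; this proves $\min\Delta(S) = \gcd\Delta(S)$. The containment $\Delta(S) \subseteq \{d, 2d, \ldots, ad\}$ then follows with $d = c$ and $a = \max\Delta(S)/d$, the maximum existing by Proposition~\ref{p:finitedelta}.

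The step I expect to be the main obstacle is this last one, turning the abstract subgroup generator $c$ into a concretely realized consecutive gap, because it requires both knowing a relation of coordinate sum exactly $c$ and the observation that intermediate lengths are ruled out precisely because every length difference is a multiple of $c$. Everything before that is essentially bookkeeping with the homomorphism $\phi$. As a byproduct, this approach shows that all lengths of a fixed $n \in S$ are congruent modulo $c$, which gives an independent route to the finiteness in Proposition~\ref{p:finitedelta} as well.
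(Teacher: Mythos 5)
Your argument is correct, and there is nothing in the paper to compare it against line by line: the text deliberately omits the proof of Proposition~\ref{p:deltagcd}, citing Geroldinger's general lemma and inviting readers to construct a proof in the numerical semigroup setting --- which is exactly what you have done. Your route through the relation lattice $\Lambda = \{v \in \ZZ^k \mid \sum v_i n_i = 0\}$ and the coordinate-sum homomorphism $\phi$ is clean and fully rigorous: the inclusion $\Delta(S) \subseteq c\ZZ$ gives $c \le \min\Delta(S)$ and $c \mid \gcd\Delta(S)$, and the splitting $v_0 = u - w$ of a relation with $|v_0| = c$ produces an explicit element $n_0$ with $\{|w|, |u|\}$ consecutive in $\mathsf L(n_0)$ (since all lengths of a fixed element are congruent modulo $c$, no length can fall strictly between them), so $c \in \Delta(S)$. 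This is essentially the standard Geroldinger argument recast in the language most natural for numerical semigroups, where the kernel of $\ZZ^k \to \ZZ$ is concrete; what it buys over the cited general proof is that it is self-contained and simultaneously shows the stronger local fact that $\mathsf L(n)$ lies in a single congruence class modulo $d$ for every $n$. One small caveat: your closing remark that this congruence gives ``an independent route to the finiteness in Proposition~\ref{p:finitedelta}'' does not hold up --- knowing every gap is a multiple of $c$ does not bound the gaps above, so finiteness still requires a separate argument (and you do correctly invoke Proposition~\ref{p:finitedelta} for the existence of $\max\Delta(S)$ in the final containment, which is legitimate since the main equality does not depend on it).
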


Proposition~\ref{p:deltagcd} raises two interesting questions, both of which were addressed by the authors of \cite{BCKR}. 
\begin{itemize}
\item 
Given positive integers $d$ and $k$, can one construct a numerical semigroup $S$ with $\Delta(S) = \{d, 2d, \ldots, kd\}$?  

\item 
Must the set containment in Proposition~\ref{p:deltagcd} be an equality?
\end{itemize}
Prior to \cite{BCKR}, all examples in the literature of delta sets (albeit in different settings - primarily in Krull domains and monoids) consisted of a set of consecutive multiples of a fixed positive integer $d$.  For numerical semigroups, on the other hand, the answer to the first question is yes, but the answer to the second is~no.

\begin{proposition}[{\cite[Corollary 4.8]{BCKR}}]\label{p:deltafull}
For each $n \geq 3$ and $k \geq 1$ with $\gcd(n, k) = 1$, the numerical semigroup
\[
S = \<n, n + k, (k+1)n - k\>,
\]
is minimally 3-generated and
\[
\Delta(S) = \bigg\{k, 2k, \ldots, \left\lfloor\frac{n+k-1}{k+2}\right\rfloor k\bigg\}.
\]
Hence, for any positive integers $k$ and $t$, there exists a three generated
numerical semigroup $S$ such that $\Delta(S) = \{k, 2k, \ldots, tk\}$.
\end{proposition}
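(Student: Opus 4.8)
The plan is to prove the two assertions—minimal $3$-generation and the explicit formula for $\Delta(S)$—in sequence, and then to deduce the ``Hence'' clause as a counting consequence of the formula. Write $n_1 = n$, $n_2 = n+k$, and $n_3 = (k+1)n - k$; the inequality $n_1 < n_2 < n_3$ is routine (the last reduces to $k(n-2) > 0$). For minimality I would check that no generator lies in the semigroup generated by the other two. That $n_1 \notin \<n_2,n_3\>$ is immediate, since $n_1$ is the least positive element while every positive element of $\<n_2,n_3\>$ is at least $n_2$. For $n_3 \notin \<n_1,n_2\>$ and $n_2 \notin \<n_1,n_3\>$ I would argue by size and congruence modulo $n$: using $\gcd(n,k)=1$, solving $n_3 = a n_1 + b n_2$ forces $b \equiv -1 \pmod n$, hence $b \ge n-1$, which makes the implied coefficient of $n_1$ equal to $2-n < 0$; the argument for $n_2$ is similar and shorter.

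The engine of the length computation is the single congruence $n_1 \equiv n_2 \equiv n_3 \equiv n \pmod k$. Indeed, for any relation $\sum a_i n_i = \sum b_i n_i$ we get $n\big(\sum a_i - \sum b_i\big) \equiv 0 \pmod k$, and since $\gcd(n,k)=1$ the two factorizations have the same length modulo $k$. Hence every $\mathsf L(m)$ consists of integers lying in one residue class mod $k$, so $\Delta(S) \subseteq k\ZZ_{>0}$; this reduces the problem to deciding which multiples of $k$ actually occur as gaps.

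Two relations govern the remaining structure: the short trade $n_2 + n_3 = (k+2)n_1$, which changes length by exactly $k$, and the ``long'' relation arising from the least positive multiple of $n_3$ lying in $\<n_1,n_2\>$. Reducing mod $n$, I would show this threshold is $c_3 := \lceil (n+k)/(k+2)\rceil = \ffrac{n+k-1}{k+2} + 1$, with $c_3 n_3 = a n_1 + b n_2$ where $a = c_3(k+2) - (n+k)$ and $b = n - c_3$. Setting $t = \ffrac{n+k-1}{k+2}$ (so $c_3 = t+1$), I would then verify that the element $\beta = c_3 n_3$ has exactly the two factorizations $(0,0,c_3)$ and $(a,b,0)$: any factorization with $0 < x_3 < c_3$ would require $(c_3-x_3)n_3 \in \<n_1,n_2\>$, contradicting minimality of $c_3$, while $(a,b,0)$ is the unique $\<n_1,n_2\>$-expression because $0 \le a \le k+1 < n+k$ and $b < n$ leave no room to apply the trade $n\,n_2 = (n+k)n_1$. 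Comparing the two lengths gives $\Delta(\beta) = \{tk\}$, so $tk \in \Delta(S)$ and $tk$ is the expected maximum.

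What remains—proving $\max\Delta(S) = tk$ and realizing every intermediate multiple—is the technical core and the step I expect to be hardest. My plan is to describe $\mathsf L(m)$ for arbitrary $m$ by slicing along the number $x_3$ of copies of $n_3$: for each admissible value (those with $m - x_3 n_3 \in \<n_1,n_2\>$) the lengths contributed are $x_3$ plus the length set of $m - x_3 n_3$ in the $2$-generated semigroup $\<n_1,n_2\>$, which by the single trade $n\,n_2 = (n+k)n_1$ is an arithmetic progression of step $k$. Thus $\mathsf L(m)$ is a union of step-$k$ progressions, and gaps exceeding $k$ can only come from ``holes'' in the set of admissible $x_3$; the maximal such hole is exactly the one measured by $c_3$ at $\beta$, which is what I expect to pin the bound $\max\Delta(S) \le tk$. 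The delicate bookkeeping is twofold: ruling out any hole forcing a jump larger than $tk$, and producing, for each $1 \le j \le t$, some $m$ with a clean jump of exactly $jk$—the elements $\beta + i n_1$ for small $i$ are the natural candidates, as a direct check already confirms for $j < t$. Finally, for the ``Hence'' clause, given $k$ and $t$ I would choose $n$ in the window $t(k+2) - k + 1 \le n \le (t+1)(k+2) - k$; this contains $k+2$ consecutive integers, hence one that is $\equiv 1 \pmod k$ (so coprime to $k$) and at least $3$, and for that $n$ the formula yields $\Delta(S) = \{k, 2k, \ldots, tk\}$.
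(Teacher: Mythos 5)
The paper does not actually prove this proposition; it is quoted verbatim from \cite[Corollary~4.8]{BCKR}, so there is no in-paper argument to compare yours against, and your proposal has to stand on its own. The parts you execute are correct: the minimal $3$-generation argument via congruences modulo $n$ works; the observation that $n_1 \equiv n_2 \equiv n_3 \pmod k$ together with $\gcd(n,k)=1$ gives $\Delta(S) \subseteq k\ZZ_{>0}$ is exactly right; the identification of $c_3 = \lceil (n+k)/(k+2)\rceil = t+1$ as the least multiple of $n_3$ landing in $\<n,n+k\>$, and the verification that $\beta = c_3 n_3$ has precisely the two factorizations $(0,0,c_3)$ and $(a,b,0)$ with length gap $(c_3-1)k = tk$, are correct and show $tk \in \Delta(S)$; and the window argument for the ``Hence'' clause is fine.

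The genuine gap is the equality itself: you still owe both $\max\Delta(S) \le tk$ and the realization of $jk$ for every $1 \le j < t$, and for these you give only a plan. Two specific points in that plan are asserted rather than argued. First, the claim that ``gaps exceeding $k$ can only come from holes in the set of admissible $x_3$'' is not automatic: even when $x_3$ and $x_3+1$ are both admissible for $m$, the two step-$k$ progressions they contribute, namely $x_3 + \mathsf L_{\<n_1,n_2\>}(m - x_3 n_3)$ and $(x_3+1) + \mathsf L_{\<n_1,n_2\>}(m-(x_3+1)n_3)$, are intervals in the same residue class mod $k$ but need not a priori overlap or abut; you must control $L_{\<n_1,n_2\>}(m-(x_3+1)n_3)$ against $\ell_{\<n_1,n_2\>}(m - x_3 n_3)$, e.g.\ via the trade $n_2 + n_3 = (k+2)n_1$ applied to an extremal factorization, which requires checking that the relevant coordinate is positive. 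Second, a hole of length $h$ in the admissible set does not translate into a jump of any particular size without further bookkeeping, so ``the maximal such hole is exactly the one measured by $c_3$'' does not yet yield $\max\Delta(S) \le tk$. Similarly, ``a direct check already confirms'' that the elements $\beta + in_1$ realize the intermediate jumps is left unexecuted; these candidates do appear to work (for $S = \<7,8,13\>$ one finds $\Delta(39)=\Delta(46)=\{2\}$ and $\Delta(53)=\{1\}$), but the general verification of which $i$ produces which $jk$, and that no third progression intervenes, is precisely the content of the result. As written, the proposal establishes $\{tk\}\subseteq \Delta(S) \subseteq k\ZZ_{>0}$ and a credible roadmap, not the stated equality.
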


\begin{proposition}[{\cite[Proposition 4.9]{BCKR}}]\label{p:deltaskips}
For each $n \ge 3$, the numerical semigroup
\[
S = \<n, n + 1, n^2 - n - 1\>,
\]
is minimally 3-generated and
\[
\Delta(S) = \{1, \ldots, n - 2\} \cup \{2n - 5\}.
\]
\end{proposition}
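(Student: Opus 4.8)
The plan is to push everything down to the two--generated semigroup $\<n,n+1\>$, whose factorizations are completely understood. First I would dispose of minimal generation with the observation that
\[
n^2-n-1 = n(n+1)-n-(n+1) = F(\<n,n+1\>),
\]
so by Sylvester's formula the largest generator equals the Frobenius number of $\<n,n+1\>$ and in particular is not an element of $\<n,n+1\>$. Since $n$ is the multiplicity of $S$, and $n+1$ is not a multiple of $n$ and is smaller than $n^2-n-1$ (so not in $\<n,n^2-n-1\>$), no generator is redundant and $S$ is minimally $3$-generated.

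Next I would sort the factorizations of a fixed $m\in S$ according to the number $t$ of copies of the largest generator $n_3=n^2-n-1$ that they use (writing $n_1=n$, $n_2=n+1$ throughout). For a fixed $t$ the remainder $N_t=m-tn_3$ must be a non-negative element of $\<n,n+1\>$, and the elementary two--generated computation shows that the factorization lengths of $N_t$ inside $\<n,n+1\>$ are exactly the integers of the interval $\big[\clfrac{N_t}{n+1},\ \ffrac{N_t}{n}\big]$. Hence
\[
\mathsf L(m)=\bigcup_{t}\Big(t+\big[\,\clfrac{N_t}{n+1},\ \ffrac{N_t}{n}\,\big]\Big),
\]
the union taken over all $t\ge 0$ with $N_t\in\<n,n+1\>$. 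As $t$ increases these intervals $I_t$ slide downward and do not interleave, so the consecutive differences recorded in $\Delta(m)$ are exactly the difference $1$ coming from within each interval together with the jumps between the bottom of $I_t$ and the top of $I_{t+1}$ (or of the next admissible interval below).

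For two successive admissible indices the jump equals
\[
G(N_t)=\clfrac{N_t}{n+1}-\ffrac{N_t+1}{n}+(n-2),
\]
and rewriting the floors and ceilings turns this into $G(N)=(n-1)-w(N)-\varepsilon$, where $w(N)\ge 1$ is the number of lengths of $N$ in $\<n,n+1\>$ and $\varepsilon\in\{0,1\}$ records whether $n\mid N+1$. Because admissibility forces $w(N)\ge 1$, this gives $G(N)\le n-2$, so every non-skipping jump lands in $\{1,\dots,n-2\}$; conversely, prescribing $w(N)$ realizes each of these values (for instance $m=n^2$ and $m=n^2-1$ produce the jumps $n-2$ and $n-3$ through the relations $n\,n_1=n_2+n_3$ and $(n-1)n_2=n_1+n_3$). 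The only way to exceed $n-2$ is to skip an index $t$ with $N_t\notin\<n,n+1\>$; but every gap of $\<n,n+1\>$ is at most $n_3$ and consecutive $N_t$ differ by $n_3$, so the only skipped index admitting an admissible interval below it is $N_t=n_3$, forcing $N_{t+1}=0$. This happens precisely for the multiples $m=s\,n_3$ with $s\ge 2$: the all-$n_3$ factorization has length $s$, the value $s-1$ is impossible since $m-(s-1)n_3=n_3\notin\<n,n+1\>$, and using $s-2$ copies of $n_3$ together with the unique length-$(2n-3)$ factorization of $2n_3$ in $\<n,n+1\>$ gives length $s+2n-5$; these two lengths are consecutive in $\mathsf L(m)$ and differ by $2n-5$. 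Collecting the contributions yields $\Delta(S)=\{1,\dots,n-2\}\cup\{2n-5\}$.

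The hard part will be the floor--ceiling bookkeeping underlying the third paragraph: establishing the closed form for $G(N)$, checking that each intermediate value $2,\dots,n-2$ is genuinely attained by an admissible $N$ whose lower neighbour $N-n_3$ is again representable, and verifying that no arrangement of admissible and skipped indices creates a consecutive difference strictly between $n-2$ and $2n-5$ (or above $2n-5$). Once these routine but delicate estimates are in place, the decomposition of $\mathsf L(m)$ above delivers the claimed delta set.
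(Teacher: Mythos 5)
The paper itself gives no proof of Proposition~\ref{p:deltaskips} --- it is quoted from \cite{BCKR} --- so there is no argument here to compare yours against line by line; judged on its own terms, your strategy is sound and, as far as I can check, every formula in it is correct. Stratifying $\mathsf Z(m)$ by the number $t$ of copies of $n_3 = n^2-n-1$ and invoking the arithmetical length-set formula (Theorem~\ref{t:lengthsetarithmetical} with $a=n$, $d=1$, $k=1$) to write $\mathsf L(m)$ as a union of integer intervals $t+\big[\lceil N_t/(n+1)\rceil, \lfloor N_t/n\rfloor\big]$ is exactly the right reduction: the identity $n_3 = n(n-1)-1 = (n+1)(n-2)+1$ gives your closed form $G(N)=n-1-w(N)-\epsilon$ for the inter-interval jump, the bound $w(N)\ge 1$ gives $G\le n-2$, and since $n_3=F(\<n,n+1\>)$ the only possible ``skipped'' index sandwiched between admissible ones is $N=n_3$, which pins the exceptional delta to the multiples of $n_3$ and to the exact value $2n-5$ (your computation $\mathsf L_{\<n,n+1\>}(2n_3)=\{2n-3\}$ checks out). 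Two obligations remain genuinely open in what you wrote. First, monotonicity of the interval endpoints (needed so that only consecutive admissible indices produce deltas) requires $n\ge 4$; for $n=3$ the semigroup $\<3,4,5\>$ is arithmetical and should be dispatched separately via Corollary~\ref{c:deltas}, which also absorbs the coincidence $2n-5=n-2=1$ there. Second, your witnesses $m=n^2$ and $m=n^2-1$ only realize the deltas $n-2$ and $n-3$; to get every value $j$ with $2\le j\le n-4$ you must exhibit, for each prescribed $w_0$, an $N$ with exactly $w_0$ lengths in $\<n,n+1\>$ \emph{and} $N-n_3\in\<n,n+1\>$ (e.g.\ suitable elements of the form $a\,n(n+1)+n^2$ work, but this needs to be written down and checked). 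With those two points supplied, the decomposition you describe does deliver $\Delta(S)=\{1,\ldots,n-2\}\cup\{2n-5\}$.
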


The semigroups in Propositions~\ref{p:deltafull} and~\ref{p:deltaskips} have fairly intuitive minimal generators.  For instance, in Proposition~\ref{p:deltaskips}, $n^2-n-1$ is the Frobenius number of $\<n,n+1\>$, and in Proposition~\ref{p:deltafull}, $S$ reduces to $\<n, n + 1, 2n - 1\>$ when $k = 1$.  Note also that Proposition~\ref{p:deltaskips} gives a ``loud'' no to the second question, as it shows that one can construct as large a ``gap'' as desired in the set $\{d, 2d, \ldots, kd\}$.

 While some fairly deep results have been obtained, a good grasp on the general form for the delta set remains out of reach.  We will do such a computation for arithmetical numerical semigroups (i.e., when $S = \<a, a + d, \ldots, a + kd\>$ for $0 \leq k < a$ and~$\gcd(a, d) = 1$).   This result was originally proved in \cite[Theorem 3.9]{BCKR}, but we present a much shorter self contained proof which later appeared in~\cite{ACHP}, another product of an REU.  We begin with a lemma.
%Our results
%in Lemma 2.1 and Theorem 2.2 take advantage of the membership
%criteria for a numerical semigroup of the form $S$ found in \cite[Lemma
%7]{CGSL}.

\begin{lemma}[{\cite[Lemma 2.1]{ACHP}}]\label{l:membershiparithmetical}
Let $S$, $a$, $d$, and $k$ be defined as above.
If $n \in S$, then $n = c_1a + c_2d$ with $c_1, c_2 \in \ZZ_{\ge 0}$ and $0 \leq c_2 < a$.
\end{lemma}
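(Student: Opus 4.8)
The plan is to start from an arbitrary factorization of $n$ over the minimal generators and then normalize the coefficient on $d$ by a single exchange move. First I would write $n = \sum_{i=0}^{k} x_i(a + id)$ for some $x_0, \ldots, x_k \in \ZZ_{\ge 0}$, which exists precisely because $n \in S$. Expanding and regrouping the $a$- and $d$-contributions gives
\[
n = a\Big(\sum_{i=0}^k x_i\Big) + d\Big(\sum_{i=0}^k i x_i\Big),
\]
so setting $c_1 = \sum_i x_i$ and $c_2 = \sum_i i x_i$ yields $n = c_1 a + c_2 d$ with $c_1, c_2 \in \ZZ_{\ge 0}$. This already produces a representation of the desired shape, but possibly with $c_2 \ge a$.

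The second step is to drive $c_2$ into the range $[0, a)$ without destroying non-negativity. The key observation is the trivial identity $a \cdot d = d \cdot a$: whenever $c_2 \ge a$, I can replace the pair $(c_1, c_2)$ by $(c_1 + d,\, c_2 - a)$, since
\[
(c_1 + d)a + (c_2 - a)d = c_1 a + c_2 d = n.
\]
Each such step decreases $c_2$ by $a$ (keeping it non-negative so long as $c_2 \ge a$) and increases $c_1$ by $d$, so $c_1$ stays non-negative throughout. Iterating until $c_2 < a$ terminates because $c_2$ strictly decreases while remaining bounded below, leaving $0 \le c_2 < a$ and $c_1 \ge 0$, as required.

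The only thing to verify carefully is that the exchange move never forces a coefficient below zero, and here the asymmetry works in our favor: the coefficient we shrink ($c_2$) is exactly the one guarded by the loop condition $c_2 \ge a$, while the coefficient we grow ($c_1$) can only help. I therefore do not expect a genuine obstacle; the substance of the lemma is simply recognizing that the single relation $ad = da$ suffices to normalize $c_2$ modulo $a$. It is worth noting that the hypothesis $\gcd(a,d) = 1$ plays no role in this existence statement, and will only become relevant when uniqueness of such a representation (or finer factorization structure) is at stake.
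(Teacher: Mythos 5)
Your proof is correct and follows essentially the same route as the paper: both start from an arbitrary representation $n = c_1a + c_2d$ (you justify its existence by regrouping a factorization over the generators $a+id$, which the paper leaves implicit) and then reduce $c_2$ modulo $a$ via the exchange $ad = da$. The paper simply performs all your exchange steps at once by writing $c_2 = qa + r$ and passing to $n = a(c_1 + qd) + rd$, whereas you iterate one step at a time; the content is identical.
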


\begin{proof}
Any $n \in S$ can be written in the form $c_1a + c_2d$ for some $c_1, c_2 \in \ZZ_{\ge 0}$. Write $c_2 = qa + r$ with $0 \leq r < a$.  Now $n = c_1a + c_2d = a(c_1 + qr) + rd$. \hfill $\qed$
\end{proof}

\begin{theorem}[{\cite[Theorem 2.2]{ACHP}}]\label{t:lengthsetarithmetical}
If $S$, $a$, $d$, and $k$ are defined the same as above, $n = c_1a + c_2d \in S$ with $0 \leq c_2 < a$, and 
\[
K = \frac{c_2 - c_1k}{a + kd},
\]
then we have
\[\mathsf L(n) = \{c_1 + jd \mid K \le j \le 0\}.\]
\end{theorem}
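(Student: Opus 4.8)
The plan is to track two numerical invariants of a factorization simultaneously. Writing the generators as $a + id$ for $0 \le i \le k$, a factorization $(x_0, \ldots, x_k)$ of $n$ has length $L = \sum_{i=0}^k x_i$ and what I will call its \emph{weight} $m = \sum_{i=0}^k i\,x_i$. Since each generator contributes $a$ to the coefficient sum and $id$ to the weighted part, every factorization satisfies the single linear identity $n = aL + dm$. Conversely, a pair $(L, m)$ of non-negative integers is realized by an actual factorization exactly when $0 \le m \le kL$: the forward bound is clear since $0 \le i \le k$, and for the converse I would exhibit an explicit factorization, e.g.\ taking $\lfloor m/k \rfloor$ copies of the top generator $a + kd$, one copy of $a + (m \bmod k)d$, and filling the remaining slots with copies of $a$ (the hypothesis $m \le kL$ guarantees enough slots remain). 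Thus determining $\mathsf L(n)$ reduces to understanding which pairs $(L, m)$ with $0 \le m \le kL$ satisfy $n = aL + dm$.

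Next I would invoke the canonical form $n = c_1 a + c_2 d$ with $0 \le c_2 < a$ from Lemma~\ref{l:membershiparithmetical}. Comparing with $n = aL + dm$ gives $a(L - c_1) = d(c_2 - m)$, and since $\gcd(a, d) = 1$ this forces $d \mid (L - c_1)$. Writing $L - c_1 = jd$ for an integer $j$, I obtain the clean parametrization $L = c_1 + jd$ and $m = c_2 - ja$. In other words, every factorization length is of the form $c_1 + jd$, and the only remaining task is to determine precisely which integers $j$ occur.

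Finally I would translate the realizability constraints $0 \le m \le kL$ into bounds on $j$. The inequality $m \ge 0$ reads $c_2 - ja \ge 0$, i.e.\ $j \le c_2/a$; since $0 \le c_2 < a$ and $j$ is an integer, this is equivalent to $j \le 0$. The inequality $m \le kL$ reads $c_2 - ja \le k(c_1 + jd)$, which rearranges to $c_2 - kc_1 \le j(a + kd)$, i.e.\ $j \ge (c_2 - kc_1)/(a + kd) = K$. Hence the admissible exponents are exactly the integers $j$ with $K \le j \le 0$, and since $j \mapsto c_1 + jd$ is injective (as $d \ge 1$), this yields $\mathsf L(n) = \{c_1 + jd \mid K \le j \le 0\}$ with no repetitions.

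The step I expect to require the most care is the converse half of the realizability claim, namely that every integer weight $m$ in the range $0 \le m \le kL$ is genuinely attained by some non-negative solution of $\sum x_i = L$ and $\sum i\,x_i = m$; everything else is a formal manipulation of the identity $n = aL + dm$ together with $\gcd(a,d) = 1$. It is also worth checking at the endpoints that $j = 0$ recovers the maximum length $L(n) = c_1$ (using the shortest generator $a$) while $j = \lceil K \rceil$ recovers the minimum, which serves as a useful sanity check and confirms that the interval $[K, 0]$ is nonempty precisely because $n \in S$ guarantees at least one factorization.
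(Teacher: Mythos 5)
Your argument is correct and is essentially the paper's own proof in different packaging: the identity $n = aL + dm$ with the weight $m = \sum_i i\,x_i$ encodes the paper's congruence $la \equiv c_1 a \pmod d$ together with its inequalities $al \le n \le (a+kd)l$ (which are exactly your conditions $m \ge 0$ and $m \le kL$), and the explicit factorization you propose --- $\lfloor m/k\rfloor$ copies of $a+kd$, one intermediate generator, and copies of $a$ --- is the very factorization the paper constructs (with $q = \lfloor m/k \rfloor$ and $r = m \bmod k$). The one point to tidy is the boundary case $m = kL$, which genuinely occurs exactly when $K$ is an integer and $j = K$: there your recipe as literally stated calls for $L+1$ generators, so you should omit the redundant copy of $a + (m \bmod k)d = a$ when $k \mid m$ and verify instead that $\lceil m/k \rceil \le L$ slots suffice.
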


\begin{proof}
Suppose $l \in \mathsf L(n)$.  Now $la \equiv n \equiv c_1a \pmod{d}$, and thus $\mathsf L(n) \subset c_1 + d\ZZ$.  
Writing $l = c_1 + jd$ for $j \in \ZZ$, we see
\[
a(c_1 + jd) = al \le n \le (a + kd)l = (a + kd)(c_1 + jd),
\]
so
\[
K = \frac{c_2 - c_1k}{(a + kd)} = \frac{n - c_1(a + kd)}{(a + kd)d} \le j \le \frac{n - c_1a}{ad} = \frac{c_2}{a} < 1.
\]
This means $\mathsf L(n) \subset \{c_1 + jd \mid K \le j \le 0\}$.  

It remains to locate a factorization of length $c_1 + jd$ for each $j \in \ZZ$ with $K \le j \le 0$.  Write $c_2 - j = qk + r$ for $q, r \in \ZZ$ with $0 \le r < k$.  
% \frac{n - a(c_1 + jd)}{d}, which lies in $\ZZ$.  
% If $r$ is the remainder upon division of $p$ by $k$, then 
We have
\[
\begin{array}{r@{}c@{}l}
n
&{}={}& a(c_1 + jd) + d(c_2 - j)
= a(q + 1 + c_1 + jd - 1 - q) + d(qk + r) \\
&{}={}& q(a + kd) + (a + rd) + (c_1 + jd - 1 - q)a,
\end{array}
\]
which is a factorization of $n$ of length $c_1 + jd$.  Thus $c_1 + jd \in \mathsf L(n)$, as desired. \hfill $\qed$
\end{proof}

An obvious corollary to this theorem follows.

\begin{corollary}\label{c:deltas}
If $S$, $a$, $d$, and $k$ are as defined above, then $\Delta(S) = \{d\}$.  Moreover,
\begin{itemize}
\item if $n_2 > n_1 > 1$ are relatively prime integers, then $\Delta(\<n_1, n_2\>) = \{n_2 - n_1\}$;
\item if $n > 1$ and $k$ are integers with $1 \leq k \leq n - 1$, then for $S = \<n, n + 1, \ldots, n + k\>$ we have
that $\Delta(S) = \{1\}$.
\end{itemize}
\end{corollary}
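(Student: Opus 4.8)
The plan is to read $\Delta(S)$ directly off the length-set formula supplied by Theorem~\ref{t:lengthsetarithmetical}, treating the corollary as a bookkeeping exercise on top of that computation. First I would fix $n = c_1a + c_2d \in S$ written with $0 \le c_2 < a$ as in Lemma~\ref{l:membershiparithmetical}, so that the theorem gives $\mathsf L(n) = \{c_1 + jd \mid K \le j \le 0\}$ with $j$ ranging over \emph{consecutive} integers. Since consecutive admissible values of $j$ differ by $1$, the associated lengths $c_1 + jd$ form a block of consecutive terms of an arithmetic progression with common difference $d$. Listing $\mathsf L(n)$ in increasing order therefore yields successive gaps all equal to $d$, so $\Delta(n) \subseteq \{d\}$ for every $n \in S$, with equality precisely when $|\mathsf L(n)| \ge 2$. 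Taking the union over $n$ gives $\Delta(S) \subseteq \{d\}$ at once.

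Next I would confirm that the value $d$ is genuinely attained, so the containment is an equality. The cleanest witness is $n = a(a+d)$ (available because the presence of at least two generators means $a + d$ is a generator): this element factors both as $a + d$ copies of $a$ and as $a$ copies of $a + d$, so both $a$ and $a + d$ lie in $\mathsf L(n)$. Because every length of $n$ is congruent modulo $d$, these two lengths must be adjacent in the progression, forcing $d \in \Delta(n) \subseteq \Delta(S)$. This is exactly the nonemptiness observation already recorded when $\Delta(S)$ was defined. Combined with the previous paragraph, this establishes $\Delta(S) = \{d\}$, the first assertion.

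Finally, the two displayed consequences follow by matching each family to the template $a, a + d, \ldots, a + kd$. For $\<n_1, n_2\>$ with $n_2 > n_1 > 1$ coprime, I would set $a = n_1$, $d = n_2 - n_1$, and $k = 1$; then $\gcd(a, d) = \gcd(n_1, n_2) = 1$ and $1 \le k < a$, so the first part yields $\Delta(\<n_1, n_2\>) = \{n_2 - n_1\}$. For $S = \<n, n+1, \ldots, n+k\>$ with $1 \le k \le n - 1$, I would set $a = n$ and $d = 1$, so that $\gcd(a, d) = 1$ holds automatically and $1 \le k < a$, giving $\Delta(S) = \{1\}$.

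There is no serious conceptual obstacle, since this is a corollary to Theorem~\ref{t:lengthsetarithmetical}; the only delicate points are verifications. In each special family I would need to check that the listed generators really are the minimal generators and that the parameters satisfy $\gcd(a,d) = 1$ and $1 \le k < a$ (in the consecutive-integer case minimality reduces to $2n > n + k$, i.e.\ $k < n$, which the hypothesis $k \le n - 1$ guarantees). The substantive step, namely that each $\mathsf L(n)$ is a run of an arithmetic progression of step $d$, is handed to us in full by the theorem.
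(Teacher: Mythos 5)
Your argument is correct and is precisely the deduction the paper intends: the paper states this result without proof (calling it ``an obvious corollary'' of Theorem~\ref{t:lengthsetarithmetical}), and your reading of $\mathsf L(n)$ as a run of an arithmetic progression with step $d$, together with the witness $n = a(a+d)$ already noted when $\Delta(S)$ was introduced, fills in exactly that deduction. The parameter checks for the two special families are also right.
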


Additionally, Theorem~\ref{t:lengthsetarithmetical} can be used to show that $\mathcal L(S)$ is not a perfect invariant, that is, one cannot in general recover a given numerical semigroup $S$ from $\mathcal L(S)$.  

\begin{cproblem}\label{prob:lensetcharacterize}
Use Theorem~\ref{t:lengthsetarithmetical} to find two numerical semigroups $S_1$ and $S_2$ so
that $\mathcal{L}(S_1)=\mathcal{L}(S_2)$ but $S_1\neq S_2$.
\end{cproblem}

We close this section with another REU related result which appears in \cite{CHK}.  Writing the elements of a numerical semigroup $S$ in order as $s_1, s_2, \ldots$, where $s_i < s_{i+1}$ for all $i \geq 1$, we now consider the sequence of sets
\[
\Delta(s_1), \Delta(s_2), \Delta(s_3), \ldots
%= \left\{\Delta(s_i)\right\}_{i=1}^\infty.
\]
In the case where $S$ is arithmetical, then for large $i$ this sequence is comprised solely of $\{k\}$, which is not too interesting.  Using Table~\ref{tb:3gen}, one can construct the beginning of this sequence for $S = \<7,10,12\>$:
\[
\emptyset,\emptyset,\ldots,\emptyset, \{1\}, \emptyset,\emptyset,\emptyset,\emptyset,\emptyset, \{1\}, \emptyset, \emptyset, \{1\}, \emptyset, \{1\}, \emptyset, \{1\}, \emptyset, \{1\}, \{1\}, \{2\}, \{1\}, \ldots 
\]
While the beginning behavior of these sequences is in some sense ``chaotic,'' in the long run, they are much more well behaved.  This can be better demonstrated with some graphs.  Figure~\ref{fig:7-10-12-delta} represents the sequence of delta sets for $S = \<7,10,12\>$, while Figure~\ref{fig:6-9-20-delta} does so for the Chicken McNugget semigroup.  On these graphs, a point is plotted at $(n,d)$ if $d \in \Delta(n)$.  
 
\begin{figure}[t]
\begin{center}
\includegraphics[width=3.0in]{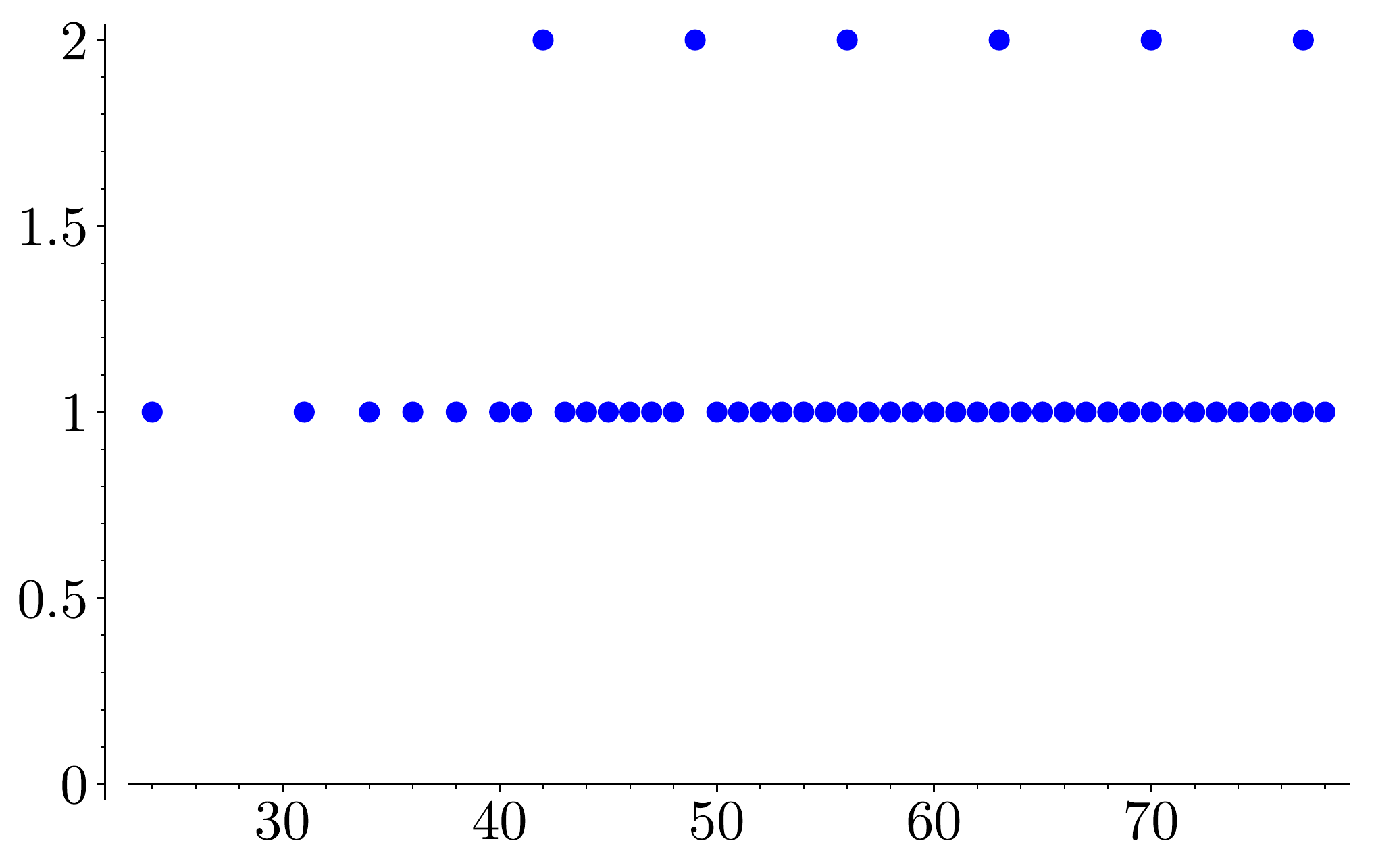}
\end{center}
\caption[Delta set values for $S = \<7,10,12\>$]{Delta set values for $S = \<7,10,12\>$.}
\label{fig:7-10-12-delta}
\end{figure}
 
\begin{figure}[t]
\begin{center}
\includegraphics[width=4.5in]{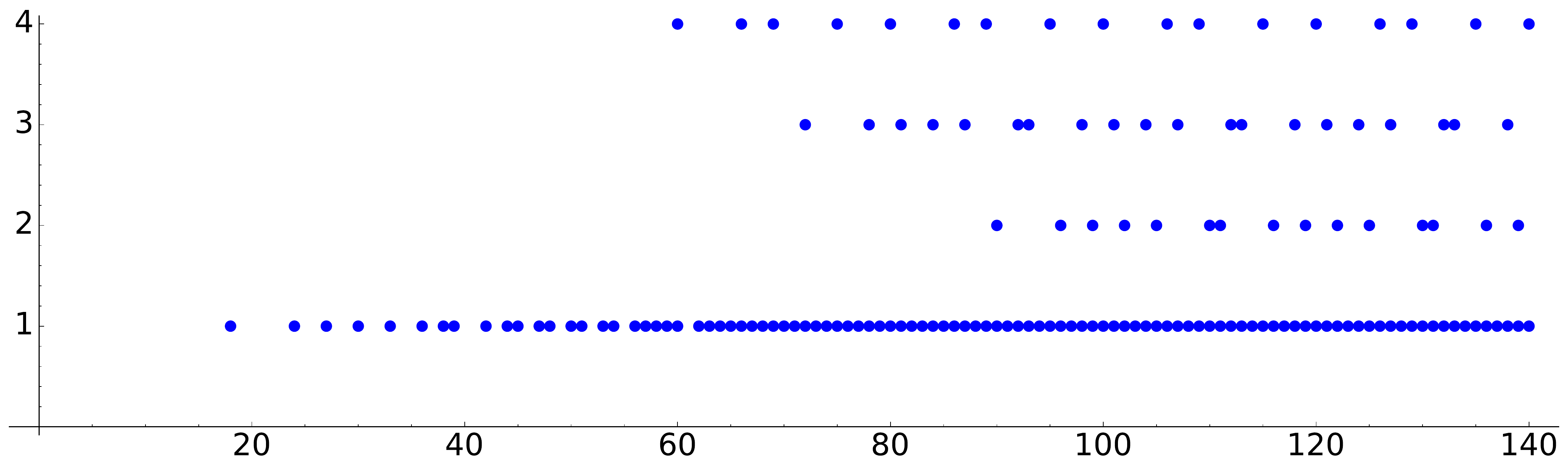}
\end{center}
\caption[Delta set values for $S = \<6,9,20\>$]{Delta set values for $S = \<6,9,20\>$.}
\label{fig:6-9-20-delta}
\end{figure}

Using data such as the above, it was conjectured shortly after the publication of~\cite{BCKR} that this sequence of sets is eventually periodic.  Three years later, this problem was solved, again as part of an REU project.  

\begin{theorem}[{\cite[Theorem 1]{CHK}}]\label{t:deltaperiodic}
Given a numerical semigroup $S = \<n_1, \ldots, n_k\>$ with $n_1 < n_2 < \cdots < n_k$ and $N = 2kn_2n_k^2 + n_1n_k$, we have 
\[
\Delta(n) = \Delta(n - n_1n_k)
\]
for every $n \ge N$.  Hence, 
\[
\Delta(S) = \bigcup_{n \in S, \, n < N} \Delta(n)
\]
\end{theorem}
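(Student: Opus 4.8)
The plan is to prove the recurrence $\Delta(n) = \Delta(n - n_1n_k)$ directly, by comparing the length sets $\mathsf L(n)$ and $\mathsf L(n - n_1n_k)$ of two consecutive ``periods,'' and then to read off the displayed formula for $\Delta(S)$ as a routine consequence. The guiding observation is that there are exactly two cheap ways to add $n_1n_k$ to a semigroup element: append $n_k$ copies of $n_1$ (raising length by $n_k$), or append $n_1$ copies of $n_k$ (raising length by $n_1$); conversely, a factorization of $n$ can be stripped of $n_k$ copies of $n_1$ whenever that many are present, or of $n_1$ copies of $n_k$, landing in $\mathsf Z(n - n_1n_k)$. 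Combined with Theorem~\ref{t:maxminlen}, which for $n$ past the threshold yields $\ell(n) = \ell(n - n_1n_k) + n_1$ and $L(n) = L(n - n_1n_k) + n_k$ (iterate the single-step rules $n_1$ and $n_k$ times, respectively), this pins the two extremes of $\mathsf L(n)$ and furnishes the easy containment $(\mathsf L(n - n_1n_k) + n_1) \cup (\mathsf L(n - n_1n_k) + n_k) \subseteq \mathsf L(n)$.

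Next I would match the gaps of $\mathsf L(n)$ near its extremes with those of $\mathsf L(n - n_1n_k)$. The key lemma at the top is that, once $n \ge N$, every factorization whose length lies within a bounded window of $L(n)$ must use at least $n_k$ copies of $n_1$; stripping these off is a bijection between the top windows of $\mathsf Z(n)$ and $\mathsf Z(n - n_1n_k)$ that shifts length by $-n_k$, so the two sets of ``top gaps'' coincide. A symmetric argument at the bottom (near-minimal factorizations must carry at least $n_1$ copies of $n_k$) matches the ``bottom gaps.'' The central portion of each length set I would then show is a full arithmetic progression of step $d = \gcd\Delta(S)$ (Proposition~\ref{p:deltagcd}), by producing for any middle length a trade changing the length by exactly $d$; this contributes only the gap $d$, which already lies in $\Delta(n - n_1n_k)$ once its span is large. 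Choosing $N = 2kn_2n_k^2 + n_1n_k$ is meant to force $L(n) - \ell(n)$ to be large enough that the top window, the bottom window, and the step-$d$ middle overlap and jointly exhaust $\mathsf L(n)$, so that $\Delta(n)$ is precisely the union of the top gaps, the bottom gaps, and $\{d\}$ — exactly the description that holds for $\Delta(n - n_1n_k)$.

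The hard part will be the middle-density claim together with the window bijections: I must rule out ``accidental'' factorizations — those using few copies of both $n_1$ and $n_k$ but many of the intermediate generators $n_2, \ldots, n_{k-1}$ — which could open an anomalous interior gap or fail to reduce to a factorization of $n - n_1n_k$ of matching length. Controlling them requires bounding, via trades among the middle generators, how far an arbitrary factorization of a large element sits from one that is rich in $n_1$ or in $n_k$; this is precisely where the factor $n_2 n_k^2$ and the sum over all $k$ generators in $N$ are consumed. Once the three regions are shown to tile $\mathsf L(n)$ compatibly, the recurrence follows, and iterating it down into the first period is legitimate because $n \ge N$ gives $n - n_1n_k \ge 2kn_2n_k^2 > F(S)$, so no term ever leaves $S$; this yields $\Delta(S) = \bigcup_{0 < n \in S,\, n < N} \Delta(n)$.
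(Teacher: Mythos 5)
The paper itself states this theorem without proof (it is quoted from \cite{CHK}), so there is no in-text argument to measure you against; I will assess the plan on its own terms. The sound ingredients are: the two shift maps $\mathsf Z(n-n_1n_k)\to\mathsf Z(n)$ (append $n_k$ copies of $n_1$, or $n_1$ copies of $n_k$), the resulting containment $(\mathsf L(n-n_1n_k)+n_1)\cup(\mathsf L(n-n_1n_k)+n_k)\subseteq\mathsf L(n)$, the matching of the extremes via Theorem~\ref{t:maxminlen}, and your recognition that factorizations poor in both $n_1$ and $n_k$ are the obstruction. The genuine gap is the load-bearing claim that, outside bounded windows at the two ends, $\mathsf L(n)$ is a full arithmetic progression with step $d=\gcd\Delta(S)$ and that every gap exceeding $d$ lives in those windows. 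That is essentially the Structure Theorem for Sets of Lengths with uniform explicit constants; it is at least as hard as the theorem you are proving (if you had it with uniform window sizes, eventual periodicity of $\Delta(n)$ would follow at once, with no need for the recurrence), and you supply no mechanism for it: producing, for an \emph{arbitrary} middle length, a factorization admitting a trade that changes length by exactly $d$ is precisely the kind of coordinatewise condition that can fail. Without the middle claim the rest does not close: for a general set $T$, the union $(T+n_1)\cup(T+n_k)$ need not have the same delta set as $T$ (take $T=\{0,10\}$, $n_1=1$, $n_k=6$; the union is $\{1,6,11,16\}$ with delta set $\{5\}$, not $\{10\}$), so the easy containment plus equality of extremes is not sufficient.

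A more elementary route, and the one that actually accounts for the constant $N$, replaces the three-region tiling with a length-preserving trade lemma: for $n\ge 2kn_2n_k^2$, \emph{every} length in $\mathsf L(n)$ is attained by a factorization with at least $n_k$ copies of $n_1$ or at least $n_1$ copies of $n_k$. Indeed, any factorization of $n$ has length at least $n/n_k\ge 2kn_2n_k$, so by pigeonhole some coordinate satisfies $z_i\ge 2n_2n_k$; if $1<i<k$, apply the identity
\[
(n_k-n_1)\,n_i=(n_k-n_i)\,n_1+(n_i-n_1)\,n_k,
\]
which preserves both the element and the factorization length, enough times to accumulate the required copies of $n_1$ or of $n_k$. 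This is where the factor $2kn_2n_k^2$ is spent, and it upgrades your one-sided containment to the equality $\mathsf L(n)=(\mathsf L(n-n_1n_k)+n_1)\cup(\mathsf L(n-n_1n_k)+n_k)$; the delta sets are then compared by analyzing directly how these two translates (which differ by the shift $n_k-n_1$ and overlap heavily) interleave, with no appeal to a structure theorem for the middle of the length set. Your closing reduction of $\Delta(S)$ to the elements below $N$ is fine once the recurrence is in hand, though you should justify $2kn_2n_k^2>F(S)$ rather than assert it.
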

 
The importance of Theorem~\ref{t:deltaperiodic} cannot be overstated, as it turns the problem of computing $\Delta(S)$ into a finite time exercise.  The bound $N$ given in Theorem~\ref{t:deltaperiodic} has been drastically improved in \cite{GGMV} (Table 1 in that paper shows exactly how drastic this improvement is).  An alternate view of the computation of $\Delta(S)$ using the \emph{Betti numbers} of $S$ can be found in~\cite{CGLMS}, which is also an REU product.

%!TEX root = 0-ursemigroups.tex

%%%%%%%%%%%%%%%%%%%%%%%%%%%%%%%%%%%%%%%%%%%%%%%%%%%%%%%%%%%%%%%%%%%%%%%%%
\section{Using software to guide mathematical inquisition}%%%%%%%%%%%%%%%
\label{sec:computing}%%%%%%%%%%%%%%%%%%%%%%%%%%%%%%%%%%%%%%%%%%%%%%%%%%%%
%raggedbottom%%%%%%%%%%%%%%%%%%%%%%%%%%%%%%%%%%%%%%%%%%%%%%%%%%%%%%%%%%%%

One of the most reliable tools when working with numerical semigroups is computer software.  We will give an overview of using the \texttt{GAP} package \texttt{numericalsgps}.   \texttt{GAP} (Groups, Algorithms, Programming) is a computer algebra system used in a variety of discrete mathemetical areas, and \texttt{numericalsgps} is a package for working specifically with numerical semigroups, including over 400 pre-programmed functions to compute numerous invariants and properties of numerical semigroups.  Full documentation for this and other packages can be found on the \texttt{GAP} website.  
\begin{center}
\url{https://www.gap-system.org/}
\end{center}
We begin by providing a brief overview of the functionality related to the topics covered in the previous section.  Once \texttt{GAP} is up and running, you must first load the \texttt{numericalsgps} package.  

{\small
\begin{verbatim}
gap> LoadPackage("numericalsgps");
true
\end{verbatim}
}

\noindent
Once this is done, you can begin to compute information about the numerical semigroups you are interested in examining, such as the Frobenius number.  

{\small
\begin{verbatim}
gap> McN:= NumericalSemigroup(6,9,20);
<Numerical semigroup with 3 generators>
gap> FrobeniusNumberOfNumericalSemigroup(McN);
43
\end{verbatim}
}

Many of the quantities the \texttt{numericalsgps} package can compute center around factorizations and their lengths.  Given how central the functions that compute $\mathsf Z(n)$ and $\mathsf L(n)$ are, these functions have undergone numerous improvements since the early days of the \texttt{numericalsgps} package, and now run surprisingly fast even for reasonably large input.  

{\small
\begin{verbatim}
gap> FactorizationsElementWRTNumericalSemigroup(50, McN);
[ [ 5, 0, 1 ], [ 2, 2, 1 ] ]
gap> FactorizationsElementWRTNumericalSemigroup(60, McN);
[ [ 10, 0, 0 ], [ 7, 2, 0 ], [ 4, 4, 0 ], 
  [ 1, 6, 0 ], [ 0, 0, 3 ] ]
gap> LengthsOfFactorizationsElementWRTNumericalSemigroup(60, McN);
[ 3, 7, 8, 9, 10 ]
gap> LengthsOfFactorizationsElementWRTNumericalSemigroup(150, McN);
[ 10, 11, 13, 14, 15, 16, 17, 18, 19, 20, 21, 22, 23, 24, 25 ]
\end{verbatim}
}

\noindent
The \texttt{numericalsgps} package can also compute delta sets, both of numerical semigroups and of their elements.  The original implementation of the latter function used Theorem~\ref{t:deltaperiodic} to compute the delta set of every element up to $N$, and only more recently was a more direct algorithm developed~\cite{affineinvariantcomp}.  

{\small
\begin{verbatim}
gap> DeltaSetOfFactorizationsElementWRTNumericalSemigroup(60, McN);
[ 1, 4 ]
gap> DeltaSetOfNumericalSemigroup(McN);
[ 1, 2, 3, 4 ]
\end{verbatim}
}

One of the primary goals is to use these observations to formulate meaningful conjectures among these concepts, and even to aide in the development of a proof.  In what follows, we hope to give a better sense of this process by walking through a specific example.  

Suppose we decide to study maximum factorization length.  We begin by using the \texttt{numericalsgps} package to compute maximum factorization lengths for elements of $S = \<7, 10, 12\>$ from Example~\ref{e:3gen}.  

{\small
\begin{verbatim}
gap> S := NumericalSemigroup(7,10,12);
<Numerical semigroup with 3 generators>
gap> FactorizationsElementWRTNumericalSemigroup(60,S);
[ [ 0, 6, 0 ], [ 4, 2, 1 ], [ 2, 1, 3 ], [ 0, 0, 5 ] ]
gap> Maximum(
> LengthsOfFactorizationsElementWRTNumericalSemigroup(60,S));
7
\end{verbatim}
}

Using built-in \texttt{GAP} functions, maximum factorization length can be computed for several semigroup elements in one go.  We first compute a list of the initial elements of~$S$ (this avoids an error message when attemting to compute $\mathsf Z(n)$ when $n \notin S$).  

{\small
\begin{verbatim}
gap> elements := Filtered([1..60], n -> (n in S));
[ 7, 10, 12, 14, 17, 19, 20, 21, 22, 24, 26, 27, 28, 29, 30, 31, 
 32, 33, 34, 35, 36, 37, 38, 39, 40, 41, 42, 43, 44, 45, 46, 47, 
 48, 49, 50, 51, 52, 53, 54, 55, 56, 57, 58, 59, 60 ]
\end{verbatim}
}

Next, we compute the values of $L(n)$ for semigroup elements $n \le 100$.  

{\small
\begin{verbatim}
gap> List(elements, n -> Maximum(
> LengthsOfFactorizationsElementWRTNumericalSemigroup(n,S)));
[ 1, 1, 1, 2, 2, 2, 2, 3, 2, 3, 3, 3, 4, 3, 3, 4, 3, 4, 4, 5, 
  4, 4, 5, 4, 5, 5, 6, 5, 5, 6, 5, 6, 6, 7, 6, 6, 7, 6, 7, 7, 
  8, 7, 7, 8, 7 ]
\end{verbatim}
}

\begin{figure}[t]
\begin{center}
\includegraphics[width=4.5in]{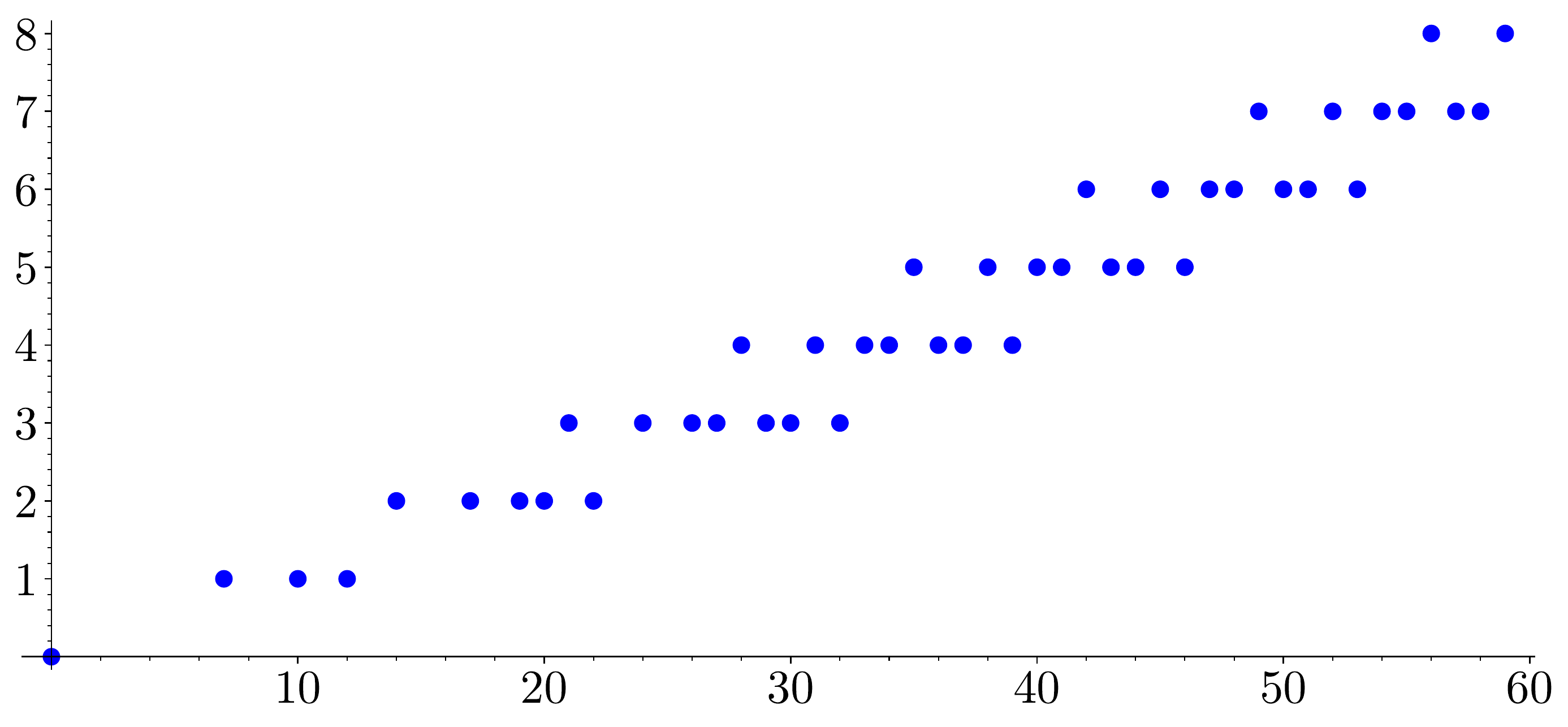}
\end{center}
\caption[Maximum factorization length values for elements of $S = \<7,10,12\>$]{Maximum factorization length values for elements of $S = \<7,10,12\>$.}
\label{fig:maxlen}
\end{figure}

Well-chosen plots can be an incredibly effective tool for visualizing such data.  Figure~\ref{fig:maxlen} depicts the values output above; the repeating pattern in the right half of the plot is undeniable.  This is what we call a \emph{quasilinear} function, that is, a linear function with periodic coefficients.  For our particular $S$ and $n \ge 26$, we have
\[
L(n) = \tfrac{1}{7}n + a(n)
\]
where $a(n)$ is a periodic function with period 7 (for instance, $a(n) = -\tfrac{11}{7}$ whenever $n \equiv 4 \bmod 7$, so that $L(60) = \tfrac{1}{7}(60) - \tfrac{11}{7} = 7$).  The resulting plot resembles 7 parallel lines, each with slope $\tfrac{1}{7}$.  Another way to express a quasilinear function with constant linear coefficient is via constant successive differences, i.e.\ 
\[
L(n + 7) - L(n) = 1
\]
for all $n \ge 26$.  Notice that this is precisely the relation claimed in Theorem~\ref{t:maxminlen}.  

In addition to elucidating the quasilinear pattern, computations can also be used help us work towards a proof.  We begin by asking \texttt{numericalsgps} to compute the factorizations of maximum length for $n = 44$, $51$, and $58$ (each $7$ apart).  

{\small
\begin{verbatim}
gap> LengthsOfFactorizationsElementWRTNumericalSemigroup(44,S);
[ 4, 5 ]
gap> LengthsOfFactorizationsElementWRTNumericalSemigroup(51,S);
[ 5, 6 ]
gap> LengthsOfFactorizationsElementWRTNumericalSemigroup(58,S);
[ 5, 6, 7 ]
gap> Filtered(FactorizationsElementWRTNumericalSemigroup(44,S), 
>             f -> (Sum(f)=5));
[ [ 2, 3, 0 ] ]
gap> Filtered(FactorizationsElementWRTNumericalSemigroup(51,S), 
>             f -> (Sum(f)=6));
[ [ 3, 3, 0 ] ]
gap> Filtered(FactorizationsElementWRTNumericalSemigroup(58,S),                  
>             f -> (Sum(f)=7));
[ [ 4, 3, 0 ] ]

\end{verbatim}
}

Notice the only change in the factorizations is the first coordinate, which increases by exactly $1$ each time the element $n$ increases by exactly $7$.  Intuitively, this is because longer factorizations should use more small generators.  This identifies where the period of $7$ and the leading coefficient of $\tfrac{1}{7}$ originate.  However, this does not yet explain why the quasilinear pattern does not begin until $n = 26$.  After testing our conjecture on several more numerical semigroups, we come across an example that provides some insight behind this final piece of the puzzle.  

{\small
\begin{verbatim}
gap> S2 := NumericalSemigroup(9,10,21);
<Numerical semigroup with 3 generators>
gap> FactorizationsElementWRTNumericalSemigroup(41,S2);
[ [ 0, 2, 1 ] ]
gap> FactorizationsElementWRTNumericalSemigroup(50,S2);
[ [ 0, 5, 0 ], [ 1, 2, 1 ] ]
gap> FactorizationsElementWRTNumericalSemigroup(59,S2);
[ [ 1, 5, 0 ], [ 2, 2, 1 ] ]
\end{verbatim}
}

Here, we see the longest factorization of $n = 50$ in $S_2 = \<9, 10, 21\>$ does not use any copies of the smallest generator.  As it turns out, this phenomenon can only happen for small semigroup elements, as once $n$ is large enough, any factorization with no copies of the smallest generator can be ``traded'' for a longer factorization that does.  This highlights the key to proving Theorem~\ref{t:maxminlen}:\ determining how large $n$ must be to ensure all of its factorizations of maximal length have at least one copy of the smallest generator.  

We invite the reader to use the ideas discussed above to obtain a rigorous proof of Theorem~\ref{t:maxminlen} (indeed, the proof appearing in~\cite{elastsets} utilizes these ideas).  

\begin{cproblem}\label{cp:maxminlen}
Prove Theorem~\ref{t:maxminlen}.  
\end{cproblem}

\section{Research projects:\ asymptotics of factorizations}%%%%%%%%%%%%%%
\label{sec:asymptotics}%%%%%%%%%%%%%%%%%%%%%%%%%%%%%%%%%%%%%%%%%%%%%%%%%%
%raggedbottom%%%%%%%%%%%%%%%%%%%%%%%%%%%%%%%%%%%%%%%%%%%%%%%%%%%%%%%%%%%%

The length of a factorization coincides with the $\ell_1$-norm of the corresponding point.  Much like Theorem~\ref{t:maxminlen}, several other norms that arise in discrete optimization appear to have EQP behavior.  The following was observed during the 2017 San Diego State University Mathematics REU, and motivates the research project that follows.  

In what follows, for $\mathbf a \in \ZZ^k$ and $r \in \ZZ_{\ge 1}$, let
\[
\|\mathbf a\|_r = (a_1^r + \cdots + a_k^r)^{1/r}
\]
and
\[
\|\mathbf a\|_\infty = \max(a_1, \ldots, a_k),
\]
which are known as the $\ell_r$- and $\ell_\infty$-norm, respectively.  

\begin{cproblem}
Let $S = \<n_1, \ldots, n_k\>$.  Prove the function
$$\ell_\infty(n) = \min\{\|\mathbf a\|_\infty : \mathbf a \in \mathsf Z_S(n)\}$$
is eventually quasilinear with period $n_1 + \cdots + n_k$.  
\end{cproblem}

\resproject{
Determine for which fixed $r \in [2, \infty)$ the functions
$$\mathsf M_r(n) = \max\{(\|\mathbf a\|_r)^r : \mathbf a \in \mathsf Z_S(n)\}$$
and
$$\mathsf m_r(n) = \min\{(\|\mathbf a\|_r)^r : \mathbf a \in \mathsf Z_S(n)\}$$
are eventually quasipolynomial.  
}

Given a numerical semigroup $S = \<n_1, \ldots, n_k\>$ with $n_1 \le \cdots \le n_k$, one can define
$$N(\ell) = |\{n \in S : \ell \in S\}|,$$
which counts the number of elements of $S$ with a given length $\ell$ in their length~set.  Unlike many functions discussed above, which take semigroup elements as input, $N$~takes factorization lengths as input.  Since each semigroup element counted by $N(\ell)$ must lie between $n_1 \ell$ and $n_k \ell$, we see that 
$$N(\ell) \le (n_k - n_1)\ell,$$
so $N(\ell)$ grows at most linearly in $\ell$.  This yields the following natural question.  

\resproject{
Fix a numerical semigroup $S$.  Determine whether
$$N(\ell) = |\{n \in S : \ell \in S\}|$$
is eventually quasilinear in $\ell \ge 0$.  
}

One of the running themes of results in the numerical semigroups literature is that the factorization structure is ``chaotic'' for small elements, but ``stabilizes'' for large elements.  Typically, the latter is easier to describe, as evidenced by the word ``eventually'' in several of the results presented above.  Broadly speaking, it would be interesting to determine how much of a numerical semigroup's structure can be recovered from that of its ``large'' elements.  The following project is an initial step in this direction, and at its heart is the question ``does the eventual behavior of a given factorization invariant uniquely determine its behavior for the whole semigroup?''  

\resproject{
Given a numerical semigroup $S = \<n_1, \ldots, n_k\>$ satisfying $n_1 < \cdots < n_k$, Theorem~\ref{t:maxminlen} implies
$$\mathsf M_S(n) = \textstyle\frac{1}{n_1}n + a_S(n) \qquad \textnormal{and} \qquad \mathsf m_S(n) = \textstyle\frac{1}{n_k}n + b_S(n)$$
for some periodic functions $a_S(n)$ and $b_S(n)$.  Characterize the functions $a_S(n)$ and $b_S(n)$ in terms of the generators of $S$.  For distinct numerical semigroups $S$ and $T$, is it possible that $a_S(n) = a_T(n)$ or $b_S(n) = b_T(n)$ for all $n$?  
}

Most of the invariants introduced thus far (and indeed, most in the literature) are derived from ``extremal'' factorizations.  In a recent REU project, ``medium'' factorization lengths were studied.  More precisely, the \emph{length multiset}
$$\mathsf L_S[\![n]\!] = \{\!\{|\mathbf a| : \mathbf a \in \mathsf Z_S(n)\}\!\}$$
was defined, wherein factorization lengths are considered with repetition, and the following quantities were considered:
\begin{itemize}
\item 
$\mu_S(n)$, the \emph{mean} of the elements of $\mathsf L_S[\![n]\!]$; and

\item 
$\eta_S(n)$, the \emph{median} of the elements of $\mathsf L_S[\![n]\!]$.  

\end{itemize}
Notice that $\mathsf L_S[\![n]\!]$ has the same cardinality as $\mathsf Z_S(n)$, since factorization lengths are counted with repetition in $\mathsf L_S[\![n]\!]$.  

\begin{example}\label{e:lengthmultiset}
The length multiset of $n = 1400$ in $S = \<5, 7, 8\>$ is depicted in Figure~\ref{fig:lengthmultiset}, wherein a point at $(\ell, m)$ indicates the length $\ell$ appears exactly $m$ times in $\mathsf L[\![n]\!]$.  The~lengths in $\mathsf L[\![n]\!]$ range from $175$ to $280$ (as~predicted by Theorem~\ref{t:elasticity}), and the mode length(s) occur around $n/7$ (note that $7$ is the middle generator of $S$).  In this case, as $n \to \infty$, the histogram approaches a \emph{triangular distribution}.  The second histogram in Figure~\ref{fig:lengthmultiset} is for the length multiset of an element of $S = \<5, 8, 9, 11\>$ and has a visually different shape.  Indeed, the limiting distribution of the length multiset is only triangular for 3-generated numerical semigroups.  
\end{example}

\begin{figure}[bt]
% \sidecaption
% Use the relevant command for your figure-insertion program
% to insert the figure file.
% For example, with the graphicx style use
\includegraphics[width=2.25in]{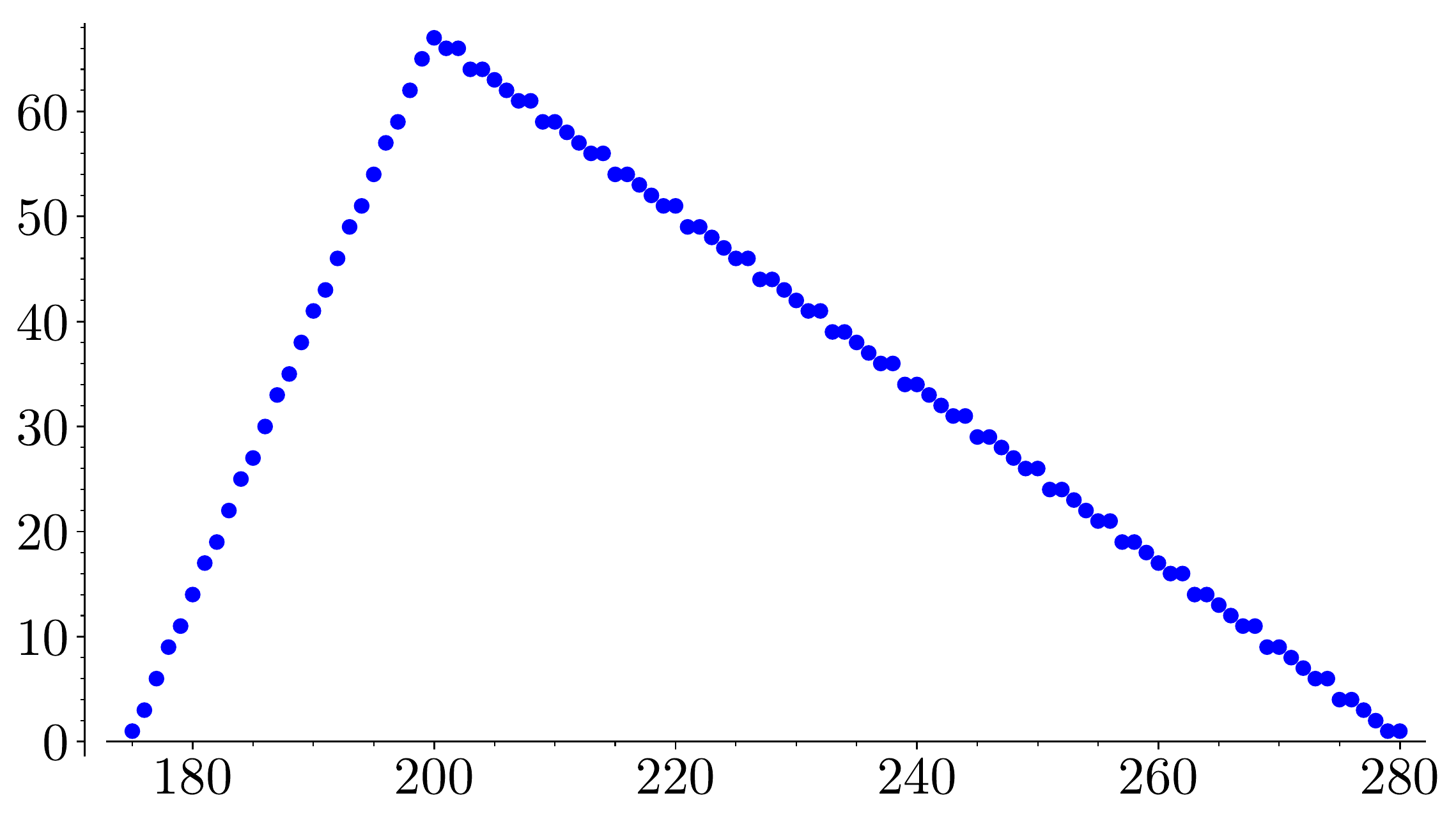}
\hspace{0.1in}
\includegraphics[width=2.25in]{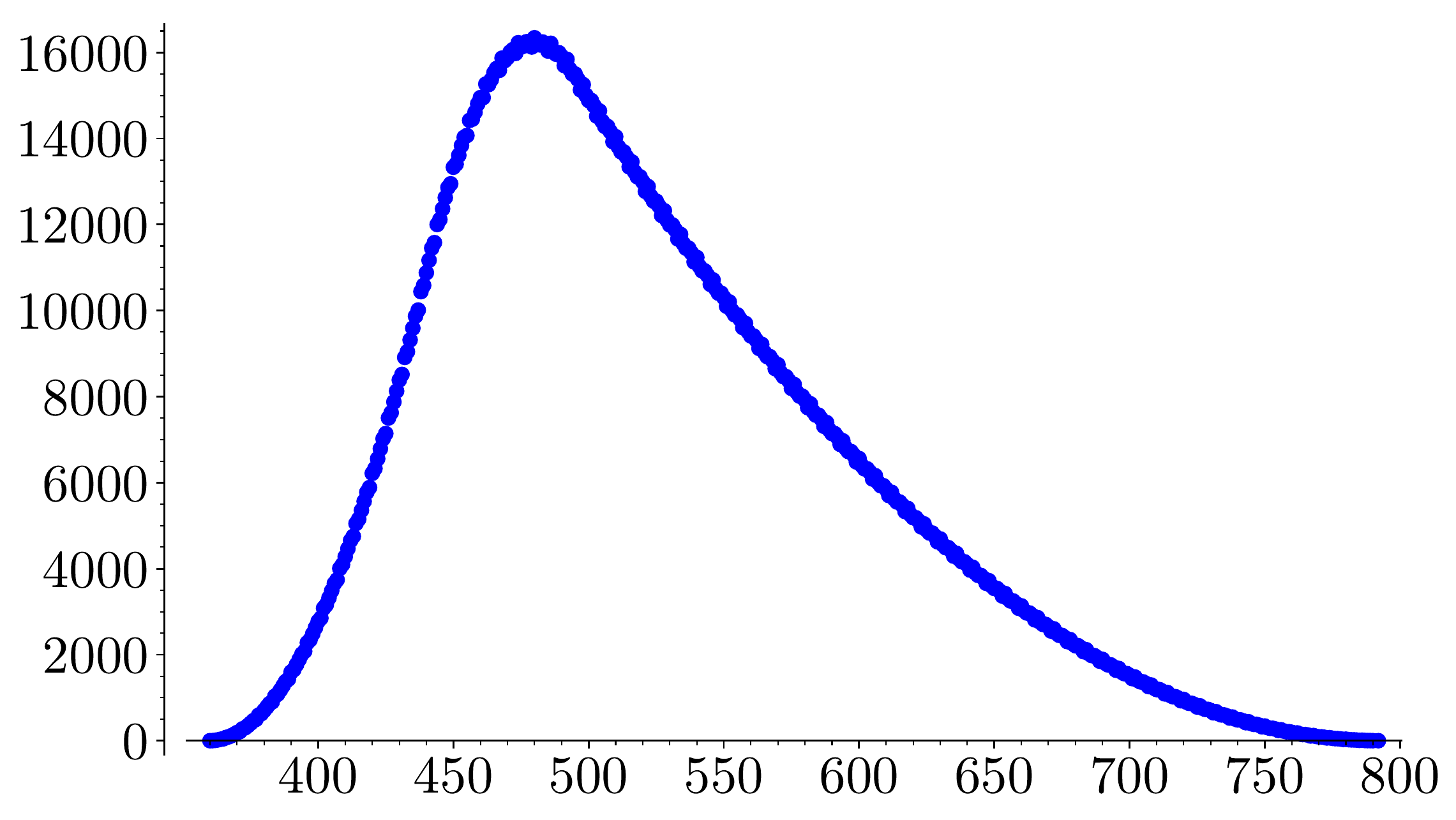}
%
% If no graphics program available, insert a blank space i.e. use
%\picplace{5cm}{2cm} % Give the correct figure height and width in cm
%
\caption{Histograms of the length multiset $\mathsf L_S[\![1400]\!]$ for $S = \<5, 7, 8\>$ (left) and the length multiset $\mathsf L_S[\![3960]\!]$ for $S = \<5, 8, 9, 11\>$ (right).}
\label{fig:lengthmultiset}       % Give a unique label
\end{figure}

The following result will appear in a forthcoming paper, and implies that although $\mu(n)$ is not itself (eventually) quasipolynomial, it can be expressed in terms of quasipolynomial functions.  

\begin{theorem}\label{t:mean}
Fix a numerical semigroup $S = \<n_1, \ldots, n_k\>$.  The function $\mu(n)$ equals the quotient of two quasipolynomial functions, and 
$$\lim_{n \to \infty} \frac{\mu_S(n)}{n} = \frac{1}{k}\left(\frac{1}{n_1} + \cdots + \frac{1}{n_k}\right).$$
\end{theorem}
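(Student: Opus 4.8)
The plan is to write $\mu_S(n)$ as a ratio $P(n)/Q(n)$ of two counting functions and to analyze each through its generating function. Set $Q(n) = |\mathsf Z_S(n)|$ and $P(n) = \sum_{\mathbf a \in \mathsf Z_S(n)} |\mathbf a|$, so that $\mu_S(n) = P(n)/Q(n)$ directly from the definition of the mean. Since a factorization $\mathbf a = (a_1, \ldots, a_k)$ of $n$ is exactly a tuple in $\ZZ_{\ge 0}^k$ with $\sum_i a_i n_i = n$, expanding the product $\prod_i \big(\sum_{a_i \ge 0} t^{a_i n_i}\big)$ gives
\[
\sum_{n \ge 0} Q(n)\, t^n = \prod_{i=1}^k \frac{1}{1 - t^{n_i}} \qquad \text{and} \qquad \sum_{n \ge 0} P(n)\, t^n = \left( \prod_{i=1}^k \frac{1}{1 - t^{n_i}} \right) \sum_{j=1}^k \frac{t^{n_j}}{1 - t^{n_j}},
\]
where the second identity comes from differentiating the $j$-th factor (using $\sum_{m \ge 0} m z^m = z/(1-z)^2$ with $z = t^{n_j}$) and summing over $j$.

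Both generating functions are rational, so a partial fraction decomposition shows that $P(n)$ and $Q(n)$ each agree with a quasipolynomial for all large $n$, with period a common multiple of the $n_i$ coming from the poles at roots of unity. This already yields the first assertion, that $\mu_S(n)$ is a quotient of quasipolynomials. For the limit I would isolate the dominant singularity. The denominator has a pole of order $k$ at $t = 1$, and the numerator a pole of order $k+1$ there, while at any other root of unity $\zeta$ of order $d > 1$ the order of the pole is $|\{i : d \mid n_i\}|$, which is strictly less than $k$ because $\gcd(n_1, \ldots, n_k) = 1$. Hence the asymptotics of $P(n)$ and $Q(n)$ are governed entirely by the behavior at $t = 1$.

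To extract the leading terms I would expand near $t = 1$ using $1 - t^{n_i} = n_i(1-t) + O((1-t)^2)$, so that
\[
\prod_{i=1}^k \frac{1}{1 - t^{n_i}} \sim \frac{1}{n_1 \cdots n_k}\,\frac{1}{(1-t)^k} \qquad \text{and} \qquad \sum_{j=1}^k \frac{t^{n_j}}{1 - t^{n_j}} \sim \frac{1}{1-t}\sum_{j=1}^k \frac{1}{n_j}.
\]
Reading off coefficients via $[t^n](1-t)^{-d} = \binom{n+d-1}{d-1} \sim n^{d-1}/(d-1)!$ gives $Q(n) \sim \frac{1}{n_1 \cdots n_k}\frac{n^{k-1}}{(k-1)!}$ and $P(n) \sim \frac{1}{n_1 \cdots n_k}\left(\sum_j \frac{1}{n_j}\right)\frac{n^k}{k!}$. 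Dividing, the factor $n_1 \cdots n_k$ cancels and
\[
\frac{\mu_S(n)}{n} = \frac{P(n)}{n\, Q(n)} \longrightarrow \frac{(k-1)!}{k!}\sum_{j=1}^k \frac{1}{n_j} = \frac{1}{k}\left(\frac{1}{n_1} + \cdots + \frac{1}{n_k}\right),
\]
as claimed.

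I expect the main obstacle to be the bookkeeping that justifies passing from the rational generating functions to honest quasipolynomials and then to the asymptotics: one must verify that the contributions of all singularities other than $t = 1$ are of strictly lower order, which is precisely where $\gcd(n_1, \ldots, n_k) = 1$ is essential, and one must check that the ratio of two quasipolynomials with possibly different periods still has a genuine limit. Both concerns dissolve once it is established that only the leading $t = 1$ terms survive in the ratio; everything else is a routine singularity analysis.
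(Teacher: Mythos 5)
The paper offers no proof of this theorem to compare against---it is stated as a result ``to appear in a forthcoming paper''---so your argument must stand on its own, and it does. Writing $\mu_S(n) = P(n)/Q(n)$ with $Q(n) = |\mathsf Z_S(n)|$ and $P(n) = \sum_{\mathbf a \in \mathsf Z_S(n)} |\mathbf a|$, deriving the two rational generating functions, and running a singularity analysis at $t=1$ is a complete and standard route to both assertions; the leading asymptotics $Q(n) \sim n^{k-1}/(n_1\cdots n_k (k-1)!)$ and $P(n) \sim \big(\sum_j n_j^{-1}\big) n^k/(n_1\cdots n_k\, k!)$ are correct, and their ratio gives exactly the claimed limit. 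One small imprecision worth fixing: your claim that at a primitive $d$-th root of unity ($d>1$) ``the order of the pole is $|\{i : d \mid n_i\}|$'' is correct only for the generating function of $Q$. For $P$, clearing denominators gives numerator $\sum_j t^{n_j}\prod_{i\ne j}(1-t^{n_i})$ over $\prod_i(1-t^{n_i})^2$, and the pole order at such a root is bounded by $m+1$ where $m = |\{i : d \mid n_i\}| \le k-1$; so the bound you need is $m+1 \le k < k+1$, not $m < k$. The conclusion---that the $t=1$ singularity dominates for both $P$ and $Q$, so the error terms are $O(n^{k-1})$ and $O(n^{k-2})$ respectively and the ratio converges---is unaffected. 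You might also note that since both numerators have degree strictly less than their denominators, $P$ and $Q$ are genuine quasipolynomials for all $n \ge 0$, not merely eventually, which cleans up the first assertion of the theorem.
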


Median factorization length has proven more difficult to describe in general.  The~limiting distribution of $\mathsf L[\![n]\!]$ is characterized for 3-generated numerical semigroups in~\cite{lengthdist1}, yielding the following theorem regarding the asymptotic growth rate of median factorization length in this case.  

\begin{theorem}\label{t:median}
Fix a numerical semigroup $S = \<n_1, n_2, n_3\>$, and let 
$$F = \frac{n_1(n_3 - n_2)}{n_2(n_3 - n_1)}$$
(called the \emph{fulcrum constant}).  We have
$$\lim_{n \to \infty} \frac{\eta_S(n)}{n} = \left\{
\begin{array}{l@{\quad}l}
\displaystyle \frac{1}{n_1} \bigg( 1 - \sqrt{ \frac{1 - F}{2} } \bigg) + \frac{1}{n_3} \sqrt{ \frac{1 - F}{2} }
& \textnormal{if } F \le \frac{1}{2},
\\[1.0em]
\displaystyle \frac{1}{n_1} \sqrt{ \frac{F}{2} } + \frac{1}{n_3} \bigg( 1 - \sqrt{ \frac{F}{2} } \bigg)
& \textnormal{if } F \ge \frac{1}{2},
\end{array}\right.$$
the value of which is irrational for some, but not all, numerical semigroups.  
\end{theorem}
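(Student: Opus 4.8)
The plan is to reduce the statement to the computation of a single quantile of an explicit continuous limiting distribution, and then to read off the piecewise formula from the classical quantile formula for a triangular distribution. First I would record the geometric picture. The factorizations of $n$ are exactly the lattice points of the affine plane $\Pi_n = \{\mathbf x \in \RR^3 : n_1 x_1 + n_2 x_2 + n_3 x_3 = n\}$ lying in the nonnegative orthant; this region is a triangle $T_n$ with vertices $(n/n_1,0,0)$, $(0,n/n_2,0)$, $(0,0,n/n_3)$, and the lattice points in it form a single coset of the rank-$2$ lattice $L = \{\mathbf v \in \ZZ^3 : n_1 v_1 + n_2 v_2 + n_3 v_3 = 0\}$. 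The length functional $\ell(\mathbf x) = x_1 + x_2 + x_3$ is linear and, on $T_n$, takes the values $n/n_1 > n/n_2 > n/n_3$ at the three vertices (consistent with Theorem~\ref{t:elasticity}). Thus $\mathsf L_S[\![n]\!]$ is precisely the image multiset of $\ell$ over these lattice points, and $\eta_S(n)$ is its median.

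Next I would pass to the limit. Rescaling by $1/n$ carries $T_n$ to the fixed triangle $T_1$ and the factorizations to a coset of $\tfrac1n L$ that fills $T_1$ with spacing tending to $0$. By the standard count of lattice points in a dilating convex body, the number of factorizations with scaled length in an interval $I$ is $\operatorname{Area}\big(\{\mathbf y \in T_1 : \ell(\mathbf y) \in I\}\big)/\operatorname{covol}(L)\cdot n^2 + O(n)$, so the empirical distribution of $\{\ell(\mathbf x)/n\}$ converges, with uniform convergence of cumulative distribution functions, to the pushforward of normalized area measure on $T_1$ under $\ell$. Projecting a triangle to a line by a linear functional yields a triangular distribution; here it is supported on $[1/n_3, 1/n_1]$ with mode at the middle-vertex value $1/n_2$ (this is the limiting-distribution statement of~\cite{lengthdist1}). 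Writing $a = 1/n_3$, $c = 1/n_2$, $b = 1/n_1$, the triangular cumulative distribution function satisfies $F_{\mathrm{tri}}(c) = (c-a)/(b-a)$, and a direct simplification gives $(c-a)/(b-a) = \tfrac{n_1(n_3-n_2)}{n_2(n_3-n_1)} = F$. In other words, the fulcrum constant is exactly the mass lying to the left of the mode, which is what makes the dichotomy $F \lessgtr \tfrac12$ natural.

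The median of the limiting triangular distribution is then read off from the classical formula: if $F \ge \tfrac12$ the median lies in $[a,c]$ and equals $a + (b-a)\sqrt{F/2}$, while if $F \le \tfrac12$ it lies in $[c,b]$ and equals $b - (b-a)\sqrt{(1-F)/2}$. Substituting $a = 1/n_3$, $b = 1/n_1$ and $b-a = (n_3-n_1)/(n_1 n_3)$ reproduces the two stated cases (they agree when $F = \tfrac12$). To conclude that $\eta_S(n)/n$ converges to this value, I would use that $\eta_S(n)/n$ is the sample median of the empirical distribution above; since the limit CDF is continuous and strictly increasing on $(a,b)$ (its density is positive there), uniform convergence of the empirical CDFs forces the sample medians to converge to the continuous median, independently of the tie-breaking convention for the finite multiset. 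The main obstacle is exactly this analytic step: securing the equidistribution with an error term that is uniform in the length coordinate, and upgrading weak convergence of the distributions to genuine convergence of the discrete median to the continuous one.

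Finally, for the irrationality claim I would exhibit explicit examples on either side. For $S = \langle 3,4,5\rangle$ one has $F = \tfrac38 < \tfrac12$, so the limit equals $\tfrac1{3}\big(1 - \tfrac{\sqrt5}{4}\big) + \tfrac15\cdot \tfrac{\sqrt5}{4} = \tfrac13 - \tfrac{1}{30}\sqrt5$, which is irrational. For $S = \langle 10,12,15\rangle$ one computes $F = \tfrac12$, where both branches collapse to $\tfrac12(1/n_1 + 1/n_3) = \tfrac1{12}$, which is rational; more generally $F = \tfrac12$ exactly when $n_2$ is the harmonic mean of $n_1$ and $n_3$, and any such minimally $3$-generated semigroup yields a rational limit. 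This establishes that the limiting value is irrational for some but not all numerical semigroups.
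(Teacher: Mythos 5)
The paper states Theorem~\ref{t:median} without proof, deferring to~\cite{lengthdist1}, so there is no internal argument to compare against; your proposal is essentially a reconstruction of the route the paper gestures at in Example~\ref{e:lengthmultiset} (limiting triangular distribution of the length multiset, then read off the median). The substance checks out: the pushforward of normalized area measure on the rescaled triangle under $x_1+x_2+x_3$ is triangular on $[1/n_3,1/n_1]$ with mode $1/n_2$; the identity $(c-a)/(b-a)=\tfrac{n_1(n_3-n_2)}{n_2(n_3-n_1)}=F$ correctly identifies the fulcrum constant as the mass left of the mode; the quantile formulas $a+(b-a)\sqrt{F/2}$ and $b-(b-a)\sqrt{(1-F)/2}$ reproduce the two branches of the stated limit; and both examples are right ($F=\tfrac38$ for $\<3,4,5\>$ giving $\tfrac13-\tfrac{\sqrt5}{30}$, and $F=\tfrac12$ for the minimally $3$-generated $\<10,12,15\>$ giving $\tfrac1{12}$, with $F=\tfrac12$ exactly when $n_2$ is the harmonic mean of $n_1$ and $n_3$). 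You correctly isolate the one step needing real work: the uniform convergence of the empirical CDFs of $\{\ell(\mathbf a)/n : \mathbf a\in\mathsf Z_S(n)\}$ to the triangular CDF. This follows from a standard lattice-point count (the factorizations are a coset of a rank-$2$ lattice in the dilating triangle, and the count in any slab is area$/\mathrm{covol}$ times $n^2$ with an $O(n)$ boundary error), after which strict monotonicity of the limit CDF on its support forces the discrete medians to converge regardless of tie-breaking. If you write this up, that equidistribution lemma is the only part that should be proved in full detail; the rest is the bookkeeping you have already done.
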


Theorem~\ref{t:median} is a stark contrast to many of the invariants discussed above, since if the limit therein is irrational, then $\eta(n)$ cannot possibly coincide with a quasipolynomial for large $n$ (indeed, this follows from the fact that any linear function sending at least 2 rational inputs to rational outputs must have rational coefficients).  As~such, studying the asymptotic behavior of median factorization length requires different techniques than previously studied invariants.  

As the histograms in Figure~\ref{fig:lengthmultiset} demonstrate, the limiting distribution of the length multiset for 3-generated numerical semigroup elements differs drastically from semigroups with more generators.  Students interested in the following project are encouraged to begin by reading~\cite{lengthdist1}, wherein the limiting distribution is carefully worked out in the 3-generated case.

\resproject{
Fix a numerical semigroup $S$.  Find a formula for
$$\lim_{n \to \infty} \frac{\eta(n)}{n},$$
the asymptotic growth rate of the median factorization length of $n$.  
}

%%%%%%%%%%%%%%%%%%%%%%%%%%%%%%%%%%%%%%%%%%%%%%%%%%%%%%%%%%%%%%%%%%%%%%%%%
\section{Research projects:\ random numerical semigroups}%%%%%%%%%%%%%%%%
\label{sec:random}%%%%%%%%%%%%%%%%%%%%%%%%%%%%%%%%%%%%%%%%%%%%%%%%%%%%%%%
%raggedbottom%%%%%%%%%%%%%%%%%%%%%%%%%%%%%%%%%%%%%%%%%%%%%%%%%%%%%%%%%%%%

Suppose someone walks up to you on the street and hands you a ``random'' numerical semigroup.  What do we expect it to look like?  Is it more likely to have a lot of minimal generators, or only a few?  How large do we expect its Frobenius number to be?  How many gaps do we expect it to have?  Such questions of ``average'' or ``expected'' behavior arise frequently in discrete mathematics, and often utilize tools from probability and real analysis that are otherwise uncommon in discrete settings.  

% Randomness and probability have been used in a variety of surprising ways to prove deterministic statements in discrete mathematics.  
The general strategy is to define a \emph{random model} that selects a mathematical object ``at random'', and then determine the probability that the chosen object has a particular property.  One prototypical example comes from graph theory:\ given a fixed integer $n$ and probability $p$, select a random graph $G$ on $n$ vertices by deciding, with independent probability $p$, whether to draw an edge between each pair of vertices $v_1$ and $v_2$.  A natural question to ask is ``what is the probability $G$ is connected?'' (note that the larger $p$ is, the more edges one expects to draw, and thus the higher chance the resulting graph is connected).  This is a difficult question to obtain an exact answer for, although estimates can be obtained for small $n$ (with the help of computer software) since there are only finitely many graphs with $n$ vertices.  That said, it turns out that for very large~$n$, there is an $\epsilon > 0$ so that
\begin{itemize}
\item 
if $p < \log(n)/n - \epsilon$, then $G$ has low probability of being connected, and 

\item 
if $p > \log(n)/n + \epsilon$, then $G$ has high probability of being connected, 

\end{itemize}
where $\epsilon \to 0$ as $n \to \infty$.  Here, the phrases ``low probability'' and ``high probability'' mean probability tending to $0$ and $1$, respectively, as $n \to \infty$.  This kind of bifurcation (illustrated in Figure~\ref{fig:ergraphs} for varying values of $n$) is a phenomenon known as a \emph{threshold function}, and occurs frequently when answering probabilistic questions in discrete mathematics.  

\begin{figure}[t]
% \sidecaption
% Use the relevant command for your figure-insertion program
% to insert the figure file.
% For example, with the graphicx style use
\includegraphics[width=4.5in]{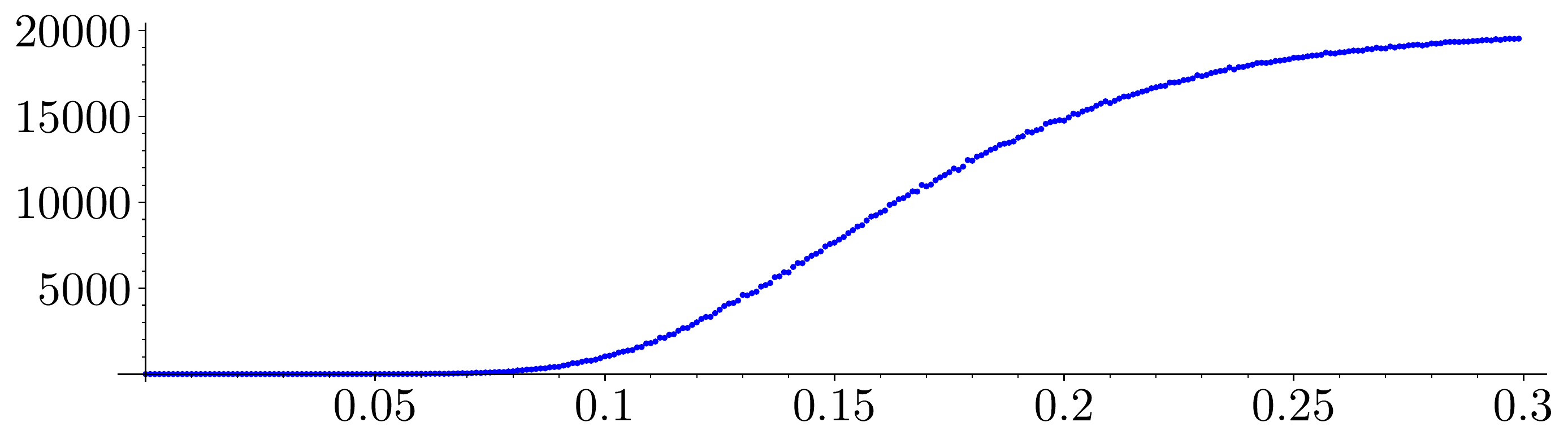}
% \\
% \includegraphics[width=4.5in]{er-graphs-50.pdf}
\\
\includegraphics[width=4.5in]{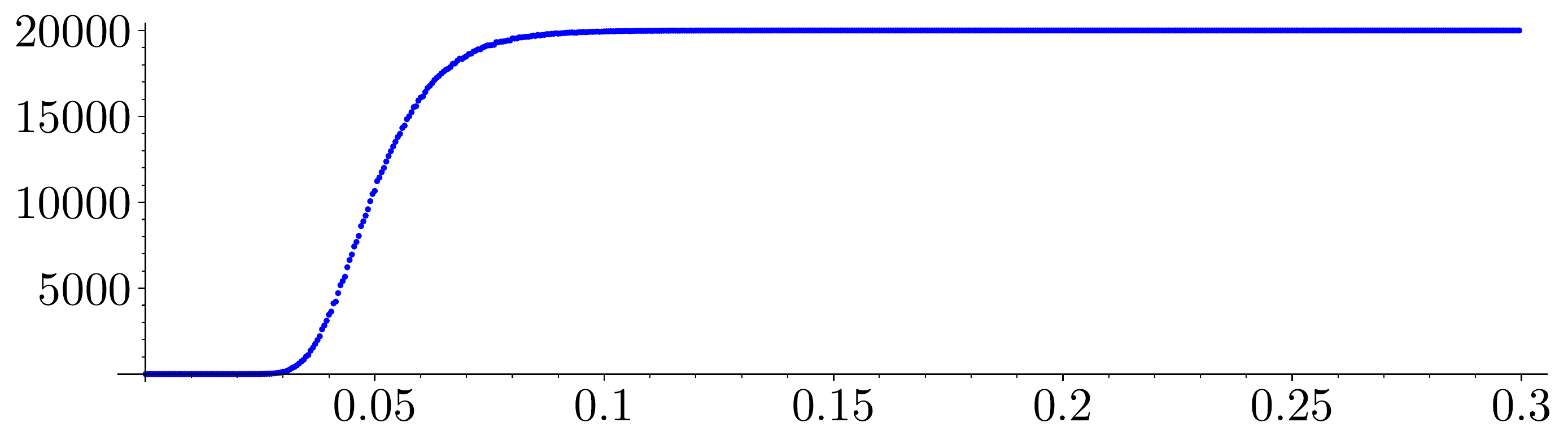}
%
% If no graphics program available, insert a blank space i.e. use
%\picplace{5cm}{2cm} % Give the correct figure height and width in cm
%
\caption{Scatterplots recording the number of connected graphs sampled as a function of $p$ for $n = 20$ and $n = 100$, respectively.  Sample size is 20,000 for each plotted value of $p$.}
\label{fig:ergraphs}
\end{figure}

The authors of~\cite{rnscomplex} introduce a model of selecting a numerical semigroup at random that is similar to the above model for random graphs.  Their model takes two inputs $M \in \ZZ_{\ge 1}$ and $p \in [0, 1]$, and randomly selects a numerical semigroup by selecting a generating set $A$ that includes each integer $n = 1, 2, \ldots, M$ with independent probability $p$.  For example, if $M = 40$ and $p = 0.1$, then one possible set is $A = \{6,9,18,20,32\}$ (this~is not unreasonable, as one would expect $4$ to be selected on average).  However, only 3 elements of $A$ are minimal generators, since $18 = 9 + 9$ and $32 = 20 + 6 + 6$.  As such, the selected semigroup $S = \langle A \rangle = \langle 6, 9, 20 \rangle$ has embedding dimension~3.  

Regarding the expected properties of numerical semigroups selected in this way, two main results are proven in~\cite{rnscomplex}.  First, the threshold function for whether or not the resulting numerical semigroup $S$ has finite complement is proven to be~$1/M$.  More precisely, for each large $M$, there exists $\epsilon > 0$ (with $\epsilon \to 0$ as $M \to \infty$) so
\begin{itemize}
\item 
if~$p < 1/M - \epsilon$, then $\ZZ_{\ge 0} \setminus S$ is finite with low probability, and

\item 
if $p > 1/M + \epsilon$, then $\ZZ_{\ge 0} \setminus S$ is finite with high probability.

\end{itemize}
% (in the same sense as for connectedness of random graphs above).  
One expects $|A| = 1$ on average when $p = 1/M$, meaning for large $M$, the selected numerical semigroup is most likely to either have finite complement (if $p > 1/M$) or equal the trivial semigroup~$\{0\}$ (if $p < 1/M$).  

The remaining results in~\cite{rnscomplex} provide lower and upper bounds on the expected number of minimal generators of the selected semigroup $S$.  More precisely, a formula is obtained for $\mathbb E [e(S)]$ in terms of $p$ and $M$ (though it is computationally infeasible for large $M$), and derives from it lower and upper bounds on $\lim_{M \to \infty} \mathbb E [\mathsf e(S)]$ for fixed $p$.  It is also shown that 
$$\lim_{M \to \infty} \mathbb E [\mathsf e(S)] = \frac{p}{1-p} \lim_{M \to \infty} \mathbb E [\mathsf g(S)],$$
thereby providing lower and upper bounds on the expected number of gaps as well.

Asymptotic estimates of this nature can be useful, for instance, in testing conjectures in semigroup theory.  Suppose a researcher has a conjecture regarding numerical semigroups with exactly 150 gaps.  They could test their conjecture on a small number of ``larger'' numerical semigroups selected using a random model, choosing the parameters so as to maximize the chances of selecting a numerical semigroup with 150 gaps.  

This random model is just one of many possible models for randomly selecting numerical semigroups, and using different models is likely to yield different expected behavior for the resulting semigroups.  Given below are some alternative models that have yet to be explored.  
% The first has the added benefit that every numerical semigroup selected is guaranteed to have finite complement and Frobenius number at most $M$, thereby avoiding a hindrance in bounding the Frobenius number in~\cite{rnscomplex}.  
The first adds a new parameter to the existing model, namely the multiplicity of the semigroup, yielding more control over which semigroups are selected.  The second selects \emph{oversemigroups} (that is, semigroups containing a given semigroup) instead of generators, and takes their intersection.  
% Unlike the previous models, this one has more gaps when $p$ is large.  

% \resproject{
% Study random numerical semigroups selected using the following model:\ given $M \in \ZZ_{\ge 1}$ and $p \in [0,1]$, select the semigroup 
% $$S = \<A\> \cup ([M+1, \infty) \cap \ZZ)$$
% by selecting a random subset $A \subset [1, M] \cap \ZZ$ that includes each integer $1, \ldots, M$ with independent probability $p$.  
% NOTE:\ this is the same as the existing one
% }

\resproject{
Study random numerical semigroups selected using the following model:\ given $M, m \in \ZZ_{\ge 1}$ and $p \in [0,1]$, select the semigroup
$$S = \<\{m\} \cup A\> \cup ([M+1, \infty) \cap \ZZ)$$
by selecting a random subset $A \subset [m+1, M] \cap \ZZ$ that includes each integer  the original model discussed above.  
}

\resproject{
Study random numerical semigroups selected using the following model:\ given $N \in \ZZ_{\ge 1}$ and $p \in [0,1]$, select the semigroup 
$$S = \bigcap_{2 \le a < b \le N} \<a, b\>$$
where each numerical semigroup $\<a, b\>$ with $\gcd(a,b) = 1$ is included in the intersection with independent probability $p$.  
}

For each of these projects, a natural starting place would be to use computer software to produce a large sample of numerical semigroups and compute the average number of minimal generators, Frobenius number, etc.\ as estimates of their expected value for varying choices of the parameters.

% %%%%%%%%%%%%%%%%%%%%%%%%%%%%%%%%%%%%%%%%%%%%%%%%%%%%%%%%%%%%%%%%%%%%%%%%%
% \section{Parametrized families of numerical semigroups}%%%%%%%%%%%%%%%%%%
% \label{sec:parametrized}%%%%%%%%%%%%%%%%%%%%%%%%%%%%%%%%%%%%%%%%%%%%%%%%%
% %raggedbottom%%%%%%%%%%%%%%%%%%%%%%%%%%%%%%%%%%%%%%%%%%%%%%%%%%%%%%%%%%%%

% Overview of shifted numerical semigroup results

% Parametric Frobenius problem

% \resproject{
% $\omega$-primality values of shifted numerical semigroups
% }

% \resproject{
% Frobenius numbers of parametrized numerical semigroups
% }

% % \resproject{
% % Shifting affine semigroups
% % }

\begin{acknowledgement}
The authors would like to thank Nathan Kaplan for discussions related to this work.
All plots created using Sage~\cite{sage}, and all computations involving numerical semigroups utilize the \texttt{GAP} package \texttt{numericalsgps} \cite{numericalsgps}.
\end{acknowledgement}

\printindex
\end{document}